 \newtheorem{theorem}{Theorem}[section]
\newtheorem{corollary}[theorem]{Corollary}
\newtheorem{lemma}[theorem]{Lemma}
\newtheorem{proposition}[theorem]{Proposition}
\theoremstyle{remark}
\newtheorem{remark}[theorem]{Remark}
\theoremstyle{definition}
\newtheorem{definition}[theorem]{Definition}
\numberwithin{equation}{section}
\begin{document}

\title[Hessian quotient equations]{The Dirichlet problem for Hessian quotient equations on exterior domains} 

\author[T.Y. Jiang]{Tangyu Jiang}
\address[T.Y. Jiang]{School of Mathematical Sciences\\
Beijing Normal University \\
	100875 Beijing\\
	P.R. China}
\email{202131081002@mail.bnu.edu.cn}

\author[H.G. Li]{Haigang Li}
\address[H.G. Li]{School of Mathematical Sciences\\
Beijing Normal University \\
	100875 Beijing\\
	P.R. China}
\email{hgli@bnu.edu.cn}

\author[X.L. Li]{Xiaoliang Li}
\address[X.L. Li]{School of Mathematical Sciences\\
Beijing Normal University \\
	100875 Beijing\\
	P.R. China}
\email{rubiklixiaoliang@163.com}

\subjclass[2010]{35J60, 35J25, 35D40, 35B40}

\keywords{Hessian quotient equations; Exterior Dirichlet problem; Existence and uniqueness; Prescribed asymptotic behavior; Perron's method}

\begin{abstract}
In this paper, we consider the exterior Dirichlet problem for Hessian quotient equations with the right hand side $g$, where $g$ is a positive function and $g=1+O(|x|^{-\beta})$ near infinity, for some $\beta>2$. Under a prescribed generalized symmetric asymptotic behavior at infinity, we establish an existence and uniqueness theorem for viscosity solutions, by using comparison principles and Perron's method. This extends the previous results for Monge--Amp\`ere equations and Hessian equations.
\end{abstract}

\maketitle

\section{Introduction}\label{sec:intro}
The aim of this paper is to study the Dirichlet problem for Hessian quotient equation
\begin{equation}\label{eq:pro}
S_{k,l}(D^2u):=\frac{\sigma_k(\lambda(D^2u))}{\sigma_l(\lambda(D^2u))}=g(x)
\end{equation}
in the exterior domain $\mathbb{R}^n\setminus\overline{D}$, where $D$ is a bounded open set in $\mathbb{R}^n$ $(n\geq 3)$, $0\leq l<k\leq n$, $\lambda(D^2 u)=(\lambda_1,\cdots,\lambda_n)$ denotes the eigenvalue vector of the Hessian matrix $D^2u$, 
\begin{equation*}
\sigma_j(\lambda(D^2u))=
\begin{cases}
\sum_{1\leq i_1<\cdots<i_j\leq n}\lambda_{i_1}\cdots\lambda_{i_j}, & j=1,\cdots, n,\\
1,& j=0,
\end{cases}
\end{equation*}
and $g\in C^0(\mathbb{R}^n\setminus D)$ is a positive function satisfying 
\begin{equation}\label{eq:beta-g}
\inf_{\mathbb{R}^n\setminus D}g>0\quad\text{and}\quad\limsup_{|x|\to\infty}|x|^\beta|g(x)-1|<\infty
\end{equation}
for some constant $\beta>0$. We are mainly concerned with the existence and uniqueness of viscosity solutions to \eqref{eq:pro}, with prescribed boundary data and asymptotic behavior at infinity.

Note that equation \eqref{eq:pro} embraces several typical cases. When $l=0$, it is Poisson equation 
\begin{equation}\label{eq:Poisson}
\sigma_1(\lambda(D^2u))=\Delta u=g
\end{equation}
if $k=1$ and fully nonlinear $k$-Hessian equation 
\begin{equation}\label{eq:k-Hessian-g}
\sigma_k(\lambda(D^2u))=g
\end{equation}
if $2\leq k\leq n$, which in particular corresponds to the famous Monge--Amp\`ere equation 
\begin{equation}\label{eq:M-A-g}
\det(D^2u)=g
\end{equation}
for $k=n$. It is well-known that, as a special class of nonlinear, second-order elliptic equations of the form $f(\lambda(D^2u))=g(x)$, the classical solvability of the interior Dirichlet problem for equation \eqref{eq:pro} has been extensively studied. We would like to mention the work of Caffarelli--Nirenberg--Spruck \cite{Caffarelli1984,Caffarelli1985} on equations \eqref{eq:k-Hessian-g}--\eqref{eq:M-A-g}, and that of Trudinger \cite{Trudinger1995} on general equation \eqref{eq:pro}; see also Ivochkina \cite{Ivochkina1983,Ivochkina1985}, Urbas \cite{Urbas1990}, Guan \cite{Guan1994}, and the references therein.

In exterior domains, the study of the Dirichlet problem for equation \eqref{eq:pro} also has received increasing attentions in recent years. It goes back to Meyers--Serrin \cite{Serrin1960}, who gave a sufficient condition for the source term $g$ to obtain the existence and uniqueness of solutions to \eqref{eq:Poisson} with an assigned limit at infinity. For nonlinear case of \eqref{eq:pro}, it started by Caffarelli--Li \cite{Caffarelli-Li-2003}, which is an extension of celebrated J\"{o}rgens--Calabi--Pogorelov theorem \cite{Calabi1958,Jorgens1954,Pogorelov1972} stating that any classical convex solution of 
\begin{equation}\label{eq:M-A}
\det(D^2u)=1
\end{equation}
in $\mathbb{R}^n$ must be a quadratic polynomial (see also \cite{Caffarelli1995,Cheng-Yau-1986,Jost2001}). They showed that if $u$ is a convex viscosity solution of \eqref{eq:M-A} outside a bounded open convex subset of $\mathbb{R}^n$ ($n\geq3$), then there is a symmetric positive definite matrix $A\in\mathbb{R}^{n\times n}$ with $\det (A)=1$, a vector $b\in\mathbb{R}^n$ and a constant $c\in\mathbb{R}$ such that
\begin{equation}
\label{eq:C-Li}
\limsup_{|x|\to\infty}\left(|x|^{n-2}\left|u(x)-\left(\frac12x^TAx+b\cdot x+c\right)\right|\right)<\infty.
\end{equation}
Moreover, by Perron's method they established an existence and uniqueness theorem for exterior solutions of \eqref{eq:M-A} prescribed by \eqref{eq:C-Li}. Then Bao--Li--Zhang \cite{Bao-Li-Zhang-2015} extended that to general equation \eqref{eq:M-A-g}, assuming \eqref{eq:beta-g} with $\beta>2$. Recently, Li--Lu \cite{Li-Lu-2018} further obtained nonexistence results to the exterior problems considered in \cite{Bao-Li-Zhang-2015,Caffarelli-Li-2003}, giving a complete characterization for their solvability. 

In spirit of \cite{Caffarelli-Li-2003}, when $g\equiv 1$ the exterior Dirichlet problem for Hessian equations \eqref{eq:k-Hessian-g} and Hessian quotient equations \eqref{eq:pro} was subsequently treated by Bao--Li--Li \cite{Bao-Li-Li-2014} and Li--Li \cite{Li-Li-2018} respectively, under proper prescribed asymptotic condition similar to \eqref{eq:C-Li}. Inspired by \cite{Bao-Li-Zhang-2015}, Cao--Bao \cite{Cao-Bao-2017} further extended the result in \cite{Bao-Li-Li-2014} to equation \eqref{eq:k-Hessian-g} with a general right hand side $g$ satisfying \eqref{eq:beta-g}. However, equation \eqref{eq:pro} with general $g\not\equiv 1$ is much more complicated to deal with than that since whenever $l>0$ elementary symmetric functions $\sigma_k$ and $\sigma_l$, of different order of homogeneities, are involved simultaneously. In particular, in this case we are confronted with the effect of irreducible fraction involved in \eqref{eq:pro}, as will be explained in detail in Remark \ref{rk:thm} (ii) below. More importantly, the comparison principle for viscosity solutions of equation \eqref{eq:pro} have not been found in a general setting of the function $g$, as far as we know, so that ones are unable to carry out the Perron process to construct the solution of the Dirichlet problem as in the Monge--Amp\`ere case and $k$-Hessian case.   

Regarding more related studies on exterior domains, we refer the readers to \cite{Bao-Li-2013,Jiang-Li-Li-2020,Li-Bao-2014,Li2019} where other Hessian type equations associated with  \eqref{eq:pro}, including the special Lagrangian equations, are considered. Besides, the Liouville type results for equation \eqref{eq:pro} can be found in \cite{Bao2003,CY2010,Li-Li-Yuan-2019,Li-Ren-Wang-2016} and the references therein.

 In this paper, we shall prove the solvability of the exterior Dirichlet problem for equation \eqref{eq:pro}, under a prescribed quadratic condition of type \eqref{eq:C-Li}. Inspired by the discussions of Trudinger \cite{Trudinger1990} for the prescribed curvature equations, we investigate the comparison principles in a general setting of Hessian type fully nonlinear elliptic equations which may embrace \eqref{eq:pro} as a special case; see Appendix \ref{sec:comparison}. Then in the spirit of Crandall, Ishii, and Lions \cite{Ishii1992}, we are allowed to establish an existence and uniqueness theorem for viscosity solutions of \eqref{eq:pro} by applying an adapted Perron's method. This extends previously known results on Monge--Amp\`ere equation \eqref{eq:M-A-g} in \cite{Bao-Li-Zhang-2015} and on $k$-Hessian equation \eqref{eq:k-Hessian-g} in \cite{Cao-Bao-2017}, and also extends those obtained in \cite{Li-Li-2018} particularly for the case $g\equiv 1$ to the general $g$ fulfilling condition \eqref{eq:beta-g}. In order to state the result specifically, we introduce below some notations.

 Let $$\mathcal{A}_{k,l}=\left\{A\in S^+(n):S_{k,l}(A)=1\right\}$$ where $S^+(n)$ is the space of real $n\times n$ symmetric positive definite matrices. For $A\in S^+(n)$, we define
$$H_k(A)=\max_{1\leq i\leq n}\frac{\sigma_{k-1;i}(\lambda(A))\lambda_i(A)}{\sigma_k(\lambda(A))}$$
where $\sigma_{k-1;i}(\lambda):=\sigma_{k-1}(\lambda)|_{\lambda_i=0}$. Then $H_k(A)\in [\frac{k}{n},1)$ for $1\leq k\leq n-1$ and $H_n(A)\equiv 1$. Let $$\mathscr{A}_{k,l}=\left\{A\in\mathcal{A}_{k,l}: H_k(A)<\frac{k-l}{2}\right\}.$$ To avoid $\mathscr{A}_{k,l}$ to be empty, we require the indices $k$ and $l$ to satisfy
 \begin{equation}\label{eq:index-k-l}
\begin{cases}
0\leq l\leq n-3 &\text{if }k\geq l+2,\\
0\leq l<\frac{n}{2}-1 &\text{if }k=l+1.
\end{cases}
\end{equation}

Our main result is the following.
\begin{theorem}\label{thm:main}
Let $D$ be a smooth, bounded, strictly convex domain in $\mathbb{R}^n$ ($n\geq 3$) and let $\varphi\in C^2(\partial D)$. Assume that $g\in C^0(\mathbb{R}^n\setminus D)$ satisfies \eqref{eq:beta-g} with $\beta>2$. Then for any $A\in\mathscr{A}_{k,l}$ and $b\in\mathbb{R}^n$, there exists a constant $c_*$ depending only on $n,A,b,D,g$ and $\|\varphi\|_{C^2(\partial D)}$, such that for every $c>c_*$ there exists a unique viscosity solution $u\in C^0(\mathbb{R}^n\setminus D)$ to the problem 
\begin{equation}\label{eq:asym-thm}
\begin{cases}
S_{k,l}(D^2u)=g \quad\text{in}\ \mathbb{R}^n\setminus\overline{D},\\
u=\varphi \quad\text{on}\ \partial D,\\
\limsup_{|x|\to\infty}\left(E(x)\Big{|}u(x)-(\frac12x^TAx+b\cdot x+c)\Big{|}\right)<\infty,
\end{cases}
\end{equation}
where 
\begin{equation}\label{eq:intro-E}
E(x):=\left\{
\begin{array}{ll}
|x|^{\min\{\beta,\frac{k-l}{H_k(A)}\}-2} & \text{if }\beta\neq\frac{k-l}{H_k(A)},\\
|x|^{\frac{k-l}{H_k(A)}-2}(\ln |x|)^{-1} & \text{if }\beta=\frac{k-l}{H_k(A)}.  
\end{array}
\right.
\end{equation}
\end{theorem}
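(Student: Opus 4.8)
The plan is to solve the exterior Dirichlet problem \eqref{eq:asym-thm} by Perron's method, using the comparison principle established in Appendix~\ref{sec:comparison} as the engine that makes the method go through. The argument splits naturally into four parts: (i) constructing a suitable subsolution that attains the boundary data $\varphi$ on $\partial D$ and has the prescribed quadratic growth $\frac12x^TAx+b\cdot x+c$ at infinity; (ii) constructing a supersolution with the same two properties; (iii) running the Perron process — taking the supremum of all subsolutions lying below the supersolution — and verifying that the resulting function is a viscosity solution with the correct boundary values; and (iv) extracting the sharp decay rate $E(x)$ and proving uniqueness via comparison. Throughout, the normalization $A\in\mathscr{A}_{k,l}$ (so $S_{k,l}(A)=1$ and $H_k(A)<\frac{k-l}{2}$) is what makes the whole scheme consistent: after an affine change of variables $y=A^{1/2}x$ turning the leading quadratic into $\frac12|y|^2$, the equation becomes $S_{k,l}(D^2u)=\tilde g$ with $\tilde g\to 1$, and one looks for solutions that are asymptotically the radial function $\frac12|y|^2$.

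The heart of the construction is the subsolution. First I would build, in the model case $g\equiv1$, an explicit radial subsolution of the form $w(r)=\frac12 r^2 + $ a correction term, by solving the ODE that $S_{k,l}$ reduces to on radial functions; the exponent $\frac{k-l}{H_k(A)}$ appearing in $E(x)$ is precisely the decay rate of the radial solution of the homogenized equation minus its quadratic part, and the condition $H_k(A)<\frac{k-l}{2}$ is exactly what guarantees $\frac{k-l}{H_k(A)}>2$, so the correction is genuinely lower order. For the general $g$ satisfying \eqref{eq:beta-g} with $\beta>2$, I would perturb this by adding a further term of size $O(|x|^{2-\min\{\beta,\frac{k-l}{H_k(A)}\}})$ (with a logarithmic modification in the borderline case $\beta=\frac{k-l}{H_k(A)}$), chosen so that the error produced by replacing $1$ by $g$ is absorbed; this is where the $\limsup|x|^\beta|g-1|<\infty$ hypothesis is used quantitatively. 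Then, following the now-standard device from \cite{Caffarelli-Li-2003, Bao-Li-Zhang-2015, Cao-Bao-2017}, I would glue this radial-type subsolution (valid far out) to a sub-solution near $\partial D$ that picks up the boundary data: near $\partial D$ one uses the strict convexity of $D$ and $\varphi\in C^2$ to build, for each $\xi\in\partial D$, a large quadratic-plus-linear lower barrier through $(\xi,\varphi(\xi))$, take the supremum over $\xi$, and then take the maximum with the translated/shifted radial subsolution $\frac12 x^TAx+b\cdot x+ (c-\text{const})$. Choosing $c$ larger than a threshold $c_*$ (depending on $n,A,b,D,g,\|\varphi\|_{C^2}$) ensures the gluing produces a global viscosity subsolution with the right boundary values. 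The supersolution is simpler: $\frac12 x^T A x + b\cdot x + c$ itself, suitably perturbed by a decaying radial term with the opposite sign, serves once $c$ is large, and it dominates the subsolution by construction.

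With sub- and supersolutions in hand, the Perron argument is routine modulo the comparison principle: one lets $u=\sup\{v: v\text{ is a subsolution of \eqref{eq:pro} in }\mathbb{R}^n\setminus\overline D,\ \underline u\le v\le \overline u\}$; the standard bump-function argument shows the upper semicontinuous envelope $u^*$ is a subsolution and the lower semicontinuous envelope $u_*$ is a supersolution, hence $u^*=u_*=u$ is a viscosity solution in the interior; the sandwich $\underline u\le u\le\overline u$ forces $u=\varphi$ on $\partial D$ and pins down the asymptotic quadratic with the stated rate $E(x)$. Uniqueness follows by applying the comparison principle of Appendix~\ref{sec:comparison} to two solutions with the same boundary data and the same prescribed quadratic: their difference is $o(1)$ at infinity after the subtraction, and one compares on large balls $B_R\setminus\overline D$, letting $R\to\infty$.

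The main obstacle, as the introduction itself flags, is the comparison principle for \eqref{eq:pro} with general $g$: because $0<l<k$ forces $\sigma_k$ and $\sigma_l$ of different homogeneity degrees to appear as an irreducible fraction, one cannot simply clear denominators and reduce to a proper (monotone) operator in the usual way, and the naive ellipticity structure degenerates. I expect the real work — carried out in Appendix~\ref{sec:comparison} in the style of Trudinger \cite{Trudinger1990} and the Crandall--Ishii--Lions theory \cite{Ishii1992} — to be identifying the correct admissible cone of solutions and the correct form of strict monotonicity (e.g. requiring one of the two solutions to be a strict sub/supersolution, or exploiting the $H_k$-based structural inequality) under which the doubling-of-variables argument closes. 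A secondary technical point is the borderline case $\beta=\frac{k-l}{H_k(A)}$, where the polynomial ansatz for the correction term fails and the logarithmic factor in \eqref{eq:intro-E} must be introduced by hand; verifying that the log-corrected barrier is still a sub/supersolution is a delicate but finite computation.
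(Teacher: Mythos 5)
Your high-level plan — sub/supersolutions with the right boundary and growth, Perron, comparison — matches the paper's, but the central construction step rests on an incorrect reduction. You propose an affine change of variables $y=A^{1/2}x$ that turns the quadratic $\frac12x^TAx$ into $\frac12|y|^2$ and then build \emph{radial} sub/supersolutions. For $k<n$ or $l>0$ this does not work: under $y=A^{1/2}x$ the Hessian becomes $A^{-1/2}(D^2_xu)A^{-1/2}$, whose eigenvalues are those of $A^{-1}D^2_xu$, not those of $D^2_xu$; the elementary symmetric functions $\sigma_k$ are \emph{not} invariant under this congruence except in the Monge--Amp\`ere case $k=n$, $l=0$. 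So the equation does not become $S_{k,l}(D^2u)=\tilde g$ in the new coordinates, and the radial-ODE ansatz is not available. This is precisely why the paper does \emph{not} change variables: it works with ``generalized symmetric'' functions $w=w(s)$, $s=\frac12x^TAx$, in the original coordinates, and uses the explicit formula (Lemma~\ref{G-lem:k-He-G-Sym})
\[
\sigma_k(\lambda(D^2w))=\sigma_k(\lambda(A))(w')^k+w''(w')^{k-1}\sum_{i}\sigma_{k-1;i}(\lambda(A))(a_ix_i)^2.
\]
The quantity $H_k(A)$ enters exactly through bounding the sum $\sum_i\sigma_{k-1;i}(\lambda(A))(a_ix_i)^2$ by $2H_k(A)\sigma_k(A)\,s$, giving the scalar ODE $(w')^{k-l}+2H_k(A)w''(w')^{k-l-1}s=\bar g(s)$. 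Your $\frac{k-l}{H_k(A)}$ exponent and the condition $H_k(A)<\frac{k-l}{2}$ cannot be recovered from a genuinely radial computation; they come from this $A$-dependent inequality. Without this, the proposal has no correct mechanism for producing the barriers.

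Two smaller points. First, you say the supersolution is ``simpler'' because the pure quadratic $\frac12x^TAx+b\cdot x+c$ works with a small perturbation. The paper explicitly notes (Remark~\ref{rk:thm}(ii)) that this fails unless $\inf g=1$: for general $g$ the quadratic need not be a supersolution of $S_{k,l}=g$, and one must construct the supersolution from scratch by the same ODE device with a lower envelope $\underline g\le g$. Second, your description of the comparison principle's proof is speculative: the paper's Appendix~\ref{sec:comparison} does not use the Crandall--Ishii doubling lemma, nor does it require one of the two functions to be a strict sub/supersolution. It instead uses Jensen-type sup/inf-convolution regularization combined with the Aleksandrov upper-contact-set lemma, and then removes the strictness by scaling $u\mapsto u/t$ and exploiting the structural monotonicity hypothesis $\sum_i\lambda_i\,\partial f/\partial\lambda_i\ge\nu(f)$. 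The approach you guessed would likely be much harder to close because of the homogeneity mismatch you yourself flag.
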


\begin{remark}\label{rk:thm}
(i) Notice that when $l=0$ and $2\leq k\leq n$, $\mathscr{A}_{k,0}=\mathcal{A}_{k,0}$ by Proposition \ref{G-pro:A-k-l} and Theorem \ref{thm:main} was proved in \cite{Bao-Li-Zhang-2015} for equation \eqref{eq:M-A-g} and in \cite{Cao-Bao-2017} for equation \eqref{eq:k-Hessian-g}, respectively. We here extend to the general case \eqref{eq:pro} and are able to prove Theorem \ref{thm:main} in a systematic way for all of $(l,k)$ satisfying \eqref{eq:index-k-l}. 

(ii) When $l\geq1$ and $g\equiv 1$, Theorem \ref{thm:main} was proved in \cite{Li-Li-2018} by rewriting \eqref{eq:pro} as $$\sigma_k(\lambda(D^2u))=\sigma_l(\lambda(D^2u)),$$ namely, not considering it in a sense of fraction. However, whenever $g\not\equiv 1$ one has to tackle the difficulty caused by the irreducible fraction in \eqref{eq:pro} when seeking subsolutions and supersolutions of \eqref{eq:pro} for carrying out the Perron process. Indeed, different from that adopted in \cite{Li-Li-2018}, here we turn to solve a new ODE to construct the desired subsolutions; see Remark \ref{G-rk:no-h-l} in Subsection \ref{G-sec:sub} for more details. Also, we can obtain the desired supersolutions in a parallel way, since unlike \cite{Li-Li-2018} we are unable to directly pick quadratic polynomials $\frac12x^TAx+b\cdot x+c$ as such ones unless assuming $\inf_{\mathbb{R}^n\setminus D}g=1$.
\end{remark}
\begin{remark}
It is not difficult to observe that Theorem \ref{thm:main} still holds with $\mathscr{A}_{k,l}$ adapted to a slightly larger set $
\mathscr{A}^*_{k,l}:=\{A\in\mathbb{R}^{n\times n}: A^*\in\mathscr{A}_{k,l}\}$
where $A^*=\frac{A+A^T}{2}$.
\end{remark}

The proof of Theorem \ref{thm:main} is based on Perron's method, formulated in Theorem \ref{thm:Perron-m} for the general fully nonlinear elliptic equation of form \eqref{app-eq:F-x-D2u} which includes \eqref{eq:pro} as a specific case. We remark that Theorem \ref{thm:Perron-m}  is an adaption of arguments as in \cite{Ishii1992,Ishii1989,Ishii-Lions-1990} once the uniqueness and comparison results are well established (see Theorem \ref{thm:comparison-b} and Corollary \ref{thm:comparison-ub}). With the help of that, the key ingredient of our proof lies in finding a family of appropriate subsolutions and supersolutions of \eqref{eq:pro} both with uniformly quadratic asymptotics at infinity. As a realization, we work with the so-called generalized symmetric functions (an extension of radial functions, specified in Definition \ref{G-def:G-Sym}), on which $k$-Hessian operators $S_{k,0}$ can be computed explicitly (see Lemma \ref{G-lem:k-He-G-Sym} below), thus helping us obtain the desired subsolutions and supersolutions by solving two corresponding second-order ODEs (\eqref{G-eq:ode-sub} and \eqref{G-eq:ode-super}). We would like to point out that the reason why we restrict $\beta>2$ and $A\in\mathscr{A}_{k,l}$ in Theorem \ref{thm:main} is to ensure the asymptotically quadratic property of subsolutions and supersolutions of \eqref{eq:pro} to be constructed in Propositions \ref{G-pro:sub} and \ref{G-pro:super}. One can see this from the computations performed in part \textbf{(c)} in Subsection \ref{G-sec:sub}.

The remainder of this paper is organized as follows. Section \ref{sec:G} is devoted to the construction of a family of subsolutions and supersolutions of \eqref{eq:pro} with asymptotically quadratic property. Then we apply Theorem \ref{thm:Perron-m} to prove Theorem \ref{thm:main} in Section \ref{sec:3}. In Appendix \ref{sec:comparison}, in a general setting of fully nonlinear elliptic equations related to the eigenvalues of the Hessian, we employ approximation arguments to prove comparison principles for viscosity solutions of the equations, both on bounded and unbounded domains. In Appendix \ref{sec:perron}, we state precisely Perron's method in Theorem \ref{thm:Perron-m} and present the proof, for the reader's convenience.

\section{Generalized symmetric subsolutions and supersolutions}\label{sec:G}
In this section, we first introduce the concept of generalized symmetric functions as well as their fine properties. Then by solving two second-order ODEs we obtain a family of generalized symmetric subsolutions and supersolutions of \eqref{eq:pro}, both of which are uniformly $k$-convex and asymptotically quadratic near infinity; see Propositions \ref{G-pro:sub} and \ref{G-pro:super}. This is essential to apply the Perron's arguments to prove Theorem \ref{thm:main} in the next section.

Throughout the section, we use the symbol $$\lambda(A):=(\lambda_1(A),\lambda_2(A),\cdots,\lambda_n(A))$$ with the order $\lambda_1(A)\leq\lambda_2(A)\leq\cdots\leq\lambda_n(A)$ to denote the eigenvalue vector of a real $n\times n$ symmetric matrix $A$.

\subsection{Preliminary}\label{G-sec:pre}
We present some definitions and related facts which will be used later to seek proper subsolutions and supersolutions of  \eqref{eq:pro}.

As in \cite{Bao-Li-Li-2014}, we define the generalized symmetric (abbreviated to G-Sym in the sequel) function in the following sense.
\begin{definition}\label{G-def:G-Sym}
For a symmetric matrix $A$, we call $u$ a G-Sym function with respect to $A$ if it is a function of $s=\frac12x^TAx$, $x\in\mathbb{R}^n$, that is $$u(x)=u(s):=u(\frac12x^TAx).$$
If $u$ is a subsolution (supersolution) of \eqref{eq:pro} and is also a G-Sym function, we say that $u$ is a G-Sym subsolution (supersolution) of \eqref{eq:pro}.
\end{definition}

There is an explicit formula \eqref{G-eq:k-H-G-Sym} below for $k$-Hessian operators acting on smooth G-Sym functions and its proof can be found in Bao--Li--Li \cite{Bao-Li-Li-2014}. 
\begin{lemma}\label{G-lem:k-He-G-Sym}
Let $A=\mathrm{diag}(a_1,\cdots,a_n)$. If $w\in C^2(\mathbb{R}^n)$ is a G-Sym function with respect to $A$, then
\begin{equation}\label{G-eq:k-H-G-Sym}
\sigma_k(\lambda(D^2w))=\sigma_k(\lambda(A))(w')^k+w''(w')^{k-1}\sum_{i=1}^n\sigma_{k-1;i}(\lambda(A))(a_ix_i)^2,
\end{equation}
where $w':=\frac{dw}{ds}$ and $w'':=\frac{d^2w}{ds^2}$, $s=\frac12x^TAx$.
\end{lemma}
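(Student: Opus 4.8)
The plan is to compute the Hessian $D^2w$ of the G-Sym function $w(x) = w(s)$ with $s = \tfrac12 x^T A x$ directly, and then exploit the multilinearity of $\sigma_k$ to split the resulting expression into two pieces. First I would record that $\partial_i s = (Ax)_i = a_i x_i$ (using $A = \mathrm{diag}(a_1,\dots,a_n)$), so that by the chain rule $\partial_i w = w'(s)\, a_i x_i$ and hence
\[
\partial_{ij} w = w''(s)\, a_i x_i\, a_j x_j + w'(s)\, a_i \delta_{ij}.
\]
In matrix form this says $D^2 w = w'(s)\, A + w''(s)\, (Ax)(Ax)^T$, i.e. $D^2 w$ is a rank-one perturbation of the scaled diagonal matrix $w'(s) A$.

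Next, rather than diagonalizing $D^2w$ and tracking its eigenvalues, I would use the fact that for any $n\times n$ matrices, $\sigma_k(\lambda(M))$ equals the sum of all $k\times k$ principal minors of $M$, and that this is a multilinear, hence additive-in-each-row-and-column, function; more precisely, I would invoke the polarization identity for $\sigma_k$: writing $\sigma_k(\lambda(M)) = \frac{1}{k!} \frac{d^k}{dt^k}\big|_{t=0} \det(I + tM)$ or, better, the expansion
\[
\sigma_k(\lambda(B + v v^T)) = \sigma_k(\lambda(B)) + \sum_{i=1}^n \sigma_{k-1;i}(\lambda(B))\, (\text{something involving } v),
\]
when $B$ is diagonal and $v v^T$ is rank one. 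Concretely, with $B = w' A = \mathrm{diag}(w' a_1,\dots,w' a_n)$ and $v = \sqrt{w'' }\,(Ax)$ (the sign/reality of $w''$ is immaterial since only $vv^T$ enters), a $k\times k$ principal minor indexed by a set $I$ of size $k$ either (a) contains no contribution from $vv^T$, giving $\prod_{i\in I} w' a_i = (w')^k \prod_{i\in I} a_i$, summing to $(w')^k \sigma_k(\lambda(A))$; or (b) picks up exactly one factor from the rank-one term — since $vv^T$ has rank one, a principal minor is linear in $vv^T$ — giving, for each choice of the "special" index $i_0 \in I$, the term $v_{i_0}^2 \prod_{i \in I\setminus\{i_0\}} w' a_i = w'' (a_{i_0} x_{i_0})^2 (w')^{k-1}\prod_{i\in I\setminus\{i_0\}} a_i$. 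Summing case (b) over all $I\ni i_0$ and all $i_0$ gives exactly $w''(w')^{k-1}\sum_{i=1}^n (a_i x_i)^2\, \sigma_{k-1;i}(\lambda(A))$, because $\sum_{I\ni i_0,\,|I|=k}\prod_{i\in I\setminus\{i_0\}} a_i = \sigma_{k-1;i_0}(\lambda(A))$ by definition of $\sigma_{k-1;i_0}$. Adding (a) and (b) yields \eqref{G-eq:k-H-G-Sym}.

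The main obstacle — really the only subtle point — is justifying the "linear in the rank-one term" step rigorously, i.e. the expansion of $\sigma_k$ of a diagonal-plus-rank-one matrix. One clean way is to use the matrix determinant lemma on each principal submatrix: for $I$ with $|I|=k$, $\det\big((B+vv^T)_I\big) = \det(B_I)\big(1 + v_I^T B_I^{-1} v_I\big) = \det(B_I) + \det(B_I)\, v_I^T B_I^{-1} v_I$ when $B_I$ is invertible (and by continuity in general), and $\det(B_I)\, v_I^T B_I^{-1} v_I = \sum_{i_0\in I} v_{i_0}^2 \det(B_{I\setminus\{i_0\}})$ since $B$ is diagonal. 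Summing over all $I$ and reorganizing the double sum $\sum_I \sum_{i_0\in I}$ as $\sum_{i_0}\sum_{I\ni i_0}$ delivers the claimed formula. Alternatively, one may simply cite that this computation was carried out in Bao--Li--Li \cite{Bao-Li-Li-2014}, as the statement already indicates. Everything else is a routine chain-rule calculation.
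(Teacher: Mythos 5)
Your derivation is correct. The paper itself does not prove this lemma; it simply points to Bao--Li--Li \cite{Bao-Li-Li-2014}, so there is no internal proof to compare against. What you supply is a clean, self-contained argument: the chain rule gives $D^2w = w'(s)A + w''(s)(Ax)(Ax)^T$, and since $\sigma_k(\lambda(M))$ is the sum of $k\times k$ principal minors, the matrix determinant lemma applied to each diagonal-plus-rank-one block $B_I + w''\,u_I u_I^T$ (with $B=w'A$, $u=Ax$) yields
\[
\det(B_I) + w''\sum_{i_0\in I} u_{i_0}^2 \prod_{i\in I\setminus\{i_0\}} w' a_i,
\]
and swapping the order of summation $\sum_{|I|=k}\sum_{i_0\in I} = \sum_{i_0}\sum_{I\ni i_0}$ turns the inner sum into $\sigma_{k-1;i_0}(\lambda(A))$ by definition. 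This is almost certainly the same computation carried out in the cited reference, so the approach is not genuinely different, merely written out rather than cited. One small presentational remark: introducing $v=\sqrt{w''}\,(Ax)$ is awkward when $w''<0$; it is cleaner to state the matrix determinant lemma in the form $\det(B+cuu^T)=\det(B)\bigl(1+c\,u^T B^{-1}u\bigr)$ with the real scalar $c=w''$, as you in fact do in your final paragraph, which avoids any detour through complex vectors. Also, invoking continuity to handle $\det B_I=0$ is fine, since both sides are polynomials in the entries.
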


Taking advantage of the following properties of the $k$-th elementary symmetric function $\sigma_k$:
\begin{equation}\label{G-eq:sigma-k}
\sigma_k(\lambda)=\frac{1}{k}\sum_{i=1}^n\lambda_i\sigma_{k-1;i}(\lambda)=\sigma_{k;i}(\lambda)+\lambda_i\sigma_{k-1;i}(\lambda), \quad\forall\,
1\leq i\leq n,\forall\,\lambda\in\mathbb{R}^n,\\
\end{equation}
we can easily verify that
$$H_k(A)=\frac{\sigma_{k-1;n}(\lambda(A))\lambda_n(A)}{\sigma_k(\lambda(A))}$$ and
\begin{equation*}
\frac{k}{n}\leq H_k(A)<1\text{ for }1\leq k\leq n-1; \ H_n(A)\equiv 1.
\end{equation*} 

We close this preliminary by presenting the relation of $\mathcal{A}_{k,l}$ and $\mathscr{A}_{k,l}$ as follows, which can be directly verified.
\begin{proposition}\label{G-pro:A-k-l}
Suppose $0\leq l<k\leq n$ and $n\geq 3$.
\begin{enumerate}
\item[(i)] $c^*(k,l)I\in\mathcal{A}_{k,l}$, where 
\begin{equation*}
c^*(k,l):=\left(\frac{C_n^l}{C_n^k}\right)^{\frac{1}{k-l}},\quad C_n^j=\frac{n!}{(n-j)!j!},\  j=k,l.
\end{equation*}
\item[(ii)] If $k-l\geq 3$ or $k-l=2$ with $k<n$, then $\mathcal{A}_{k,l}=\mathscr{A}_{k,l}$.
\end{enumerate}
\end{proposition}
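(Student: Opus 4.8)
The plan is to prove the two parts by direct computation. For (i), I would observe that the eigenvalue vector of $c^*(k,l)I$ is $(c^*,\dots,c^*)$, so that $\sigma_j(\lambda(c^*I))=C_n^j(c^*)^j$ for every $j$. Hence
\[
S_{k,l}(c^*I)=\frac{C_n^k(c^*)^k}{C_n^l(c^*)^l}=\frac{C_n^k}{C_n^l}(c^*)^{k-l},
\]
and the defining formula $c^*=(C_n^l/C_n^k)^{1/(k-l)}$ makes this quantity equal to $1$; combined with $c^*>0$ this shows $c^*I\in S^+(n)$ and $S_{k,l}(c^*I)=1$, i.e.\ $c^*I\in\mathcal{A}_{k,l}$.

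For (ii), the inclusion $\mathscr{A}_{k,l}\subseteq\mathcal{A}_{k,l}$ is immediate from the definition of $\mathscr{A}_{k,l}$, so it suffices to show that under the stated hypotheses every $A\in\mathcal{A}_{k,l}$ satisfies $H_k(A)<\frac{k-l}{2}$. Here I would use the elementary bounds recorded above, namely $\frac{k}{n}\leq H_k(A)<1$ when $1\leq k\leq n-1$ and $H_n(A)\equiv 1$; these follow from the identities \eqref{G-eq:sigma-k} together with the positivity of the eigenvalues of $A\in S^+(n)$, since $\sigma_{k;i}(\lambda(A))>0$ precisely when $k\leq n-1$ and $\sigma_{n;i}(\lambda(A))=0$. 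Now split into cases. If $k-l=2$ and $k<n$, then $\frac{k-l}{2}=1>H_k(A)$ because $k\leq n-1$. If $k-l\geq 3$ and $k\leq n-1$, then $H_k(A)<1<\frac{3}{2}\leq\frac{k-l}{2}$. Finally, if $k-l\geq 3$ and $k=n$, then $l=k-(k-l)\leq n-3$, so $\frac{k-l}{2}=\frac{n-l}{2}\geq\frac{3}{2}>1=H_n(A)$. In every case $H_k(A)<\frac{k-l}{2}$, whence $\mathcal{A}_{k,l}\subseteq\mathscr{A}_{k,l}$ and the two sets coincide.

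There is no serious obstacle in this argument; the only point requiring a little care is the borderline configuration $k=n$ inside the case $k-l\geq 3$, where one cannot invoke the strict bound $H_k(A)<1$ and must instead combine $H_n(A)\equiv1$ with the observation that the hypothesis forces $l\leq n-3$, so that $\frac{k-l}{2}$ still strictly exceeds $1$. It is also worth noting in passing that the hypotheses of (ii) are automatically compatible with the standing index restriction \eqref{eq:index-k-l} (since $k-l\geq2$ gives $k\geq l+2$, and in both sub-cases $l\leq n-3$), so that $\mathscr{A}_{k,l}\neq\varnothing$, although this is not needed for the asserted equality.
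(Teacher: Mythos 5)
Your verification is correct: part (i) is the immediate computation $S_{k,l}(c^*I)=\frac{C_n^k}{C_n^l}(c^*)^{k-l}=1$, and part (ii) follows by combining the recorded bounds $H_k(A)<1$ for $k\leq n-1$ and $H_n(A)\equiv 1$ with the case analysis on $k-l$, exactly the ``direct verification'' the paper invokes without writing out. Nothing further is needed.
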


\subsection{Generalized symmetric subsolutions}\label{G-sec:sub}
We construct here the wanted G-Sym subsolutions of \eqref{eq:pro}. 

Throughout this subsection, let $A =\text{diag}(a_1, a_2,\cdots, a_n)\in\mathcal{A}_{k,l}$, $a:=\lambda(A)$ and $w:=w(s)$ is a G-Sym function with respect to $A$, where $s=\frac12x^TAx=\frac12\sum_{i=1}^na_ix_i^2$, $x\in\mathbb{R}^n$. We denote 
\begin{equation}\label{G-eq:D-s}
D(s):=\left\{x\in\mathbb{R}^n:\frac12x^TAx<s\right\}\quad\text{for }s>0.
\end{equation}

Since $g$ satisfies \eqref{eq:beta-g}, there exist $C_0$ and $s_0>1$ such that
\begin{equation}\label{G-eq:g-upper}
g(x)\leq \bar{g}(x)=\bar{g}(s):=1+C_0s^{-\frac{\beta}{2}}, \text{ when }s=\frac12 x^TAx\geq s_0.
\end{equation}
In order to make $w$ be a smooth subsolution of \eqref{eq:pro}, i.e. $S_{k,l}(D^2w)\geq g$, we consider the following ODE: 
\begin{equation}\label{G-eq:ode-sub}
\begin{cases}
(w')^{k-l}+2H_k(A)w''(w')^{k-l-1}s=\bar{g}(s),& s>1,\\
w'(s)>0,\quad w''(s)\leq 0,&s\geq1.
\end{cases}
\end{equation}
 A uniformly $k$-convex (see \eqref{app-eq:k-convex}) solution of \eqref{G-eq:ode-sub} will be a G-Sym subsolution of equation \eqref{eq:pro} with respect to $A$ when $s\geq s_0$. Indeed, applying Lemma \ref{G-lem:k-He-G-Sym}, one has 
\begin{align}
S_{k,l}(D^2w)&=\frac{\sigma_k(a)(w')^k+w''(w')^{k-1}\sum_{i=1}^n\sigma_{k-1;i}(a)(a_ix_i)^2}{\sigma_l(a)(w')^l+w''(w')^{l-1}\sum_{i=1}^n\sigma_{l-1;i}(a)(a_ix_i)^2}\notag\\
&\geq \frac{\sigma_k(a)(w')^k+w''(w')^{k-1}\sum_{i=1}^n\sigma_{k-1;i}(a)(a_ix_i)^2}{\sigma_l(a)(w')^l}\notag\\
&=(w')^{k-l}+w''(w')^{k-l-1}\sum_{i=1}^n\frac{\sigma_{k-1;i}(a)}{\sigma_l(a)}(a_ix_i)^2\notag\\
&\geq (w')^{k-l}+2H_k(A)w''(w')^{k-l-1}s=\bar{g}(s).\label{G-eq:S-D2w-sub}
\end{align}

For ODE \eqref{G-eq:ode-sub}, by the variation-of-constant formula we figure out it has a family of smooth solutions
\begin{equation}\label{G-eq:ode-sub-solu}
w_{c_1,c_2}(s):=c_2+\int_{s_0}^s\left(\eta^{-\mathcal{H}}\left(\int_1^\eta\mathcal{H}t^{\mathcal{H}-1}\bar{g}(t)\,dt+c_1\right)\right)^{\frac{1}{k-l}}d\eta
\end{equation}
where $c_i\in\mathbb{R}$ ($i=1,2$) are two constants and $$\mathcal{H}=\mathcal{H}(k,l,A):=\frac{k-l}{2H_k(A)}.$$ 
We next perform a series of careful calculations, which contains three parts, to see whether  $w_{c_1,c_2}$ given by \eqref{G-eq:ode-sub-solu} have the fine properties we expected.

\textbf{(a)} We claim $w_{c_1,c_2}'>0$ and $w_{c_1,c_2}''\leq 0$ on $[1,+\infty)$.
 
After a direct calculation, we have
$$w_{c_1,c_2}'(s)=\left(s^{-\mathcal{H}}\left(\int_1^s\mathcal{H}t^{\mathcal{H}-1}\bar{g}(t)\,dt+c_1\right)\right)^{\frac{1}{k-l}}>0$$
if $c_1>0$, and 
 \begin{align*}
w_{c_1,c_2}''(s)&=-\frac{s^{-\mathcal{H}-1}}{2H_k(A)}(w')^{1-k+l}\left(\int_1^s\mathcal{H}t^{\mathcal{H}-1}\bar{g}(t)\,dt+c_1-s^{\mathcal{H}}\bar{g}(s)\right)\\
&=:-\frac{s^{-\mathcal{H}-1}}{2H_k(A)}(w')^{1-k+l}G(s),
\end{align*}
where 
\begin{equation*}
G(s)=
\begin{cases}
c_1+\frac{C_0\beta}{2(\mathcal{H}-\frac{\beta}{2})}s^{\mathcal{H}-\frac{\beta}{2}}-1-\frac{C_0\mathcal{H}}{\mathcal{H}-\frac{\beta}{2}}&\text{if }\mathcal{H}\neq \frac{\beta}{2},\\
c_1+C_0\mathcal{H}\ln s-1-C_0&\text{if }\mathcal{H}=\frac{\beta}{2}.
\end{cases}
\end{equation*}
Hence, there is $\tilde{C}>0$, dependent on $\beta$ but independent of $s$, such that $w_{c_1,c_2}'(s)>0$ and $w_{c_1,c_2}''(s)<0$ for $s\geq1$ when $c_1>\tilde{C}$.

\textbf{(b)} Claim: $w_{c_1,c_2}(s)$ are uniformly $k$-convex on $[1,+\infty)$, provided $c_1>\tilde{C}$. 

For any $1\leq m\leq k$ and $c_1>\tilde{C}$,
\begin{align*}
&\sigma_m(\lambda(D^2 w_{c_1,c_2}))\\&=\sigma_m(a)(w_{c_1,c_2}')^m+w_{c_1,c_2}'' (w_{c_1,c_2}')^{m-1}\sum_{i=1}^n\sigma_{m-1;i}(a)(a_ix_i)^2\\
&=\sigma_m(a)(w_{c_1,c_2}')^{m-1}\left(w_{c_1,c_2}'+w_{c_1,c_2}''\sum_{i=1}^n\frac{\sigma_{m-1;i}(a)}{\sigma_m(a)}(a_ix_i)^2\right)\\
&\geq \sigma_m(a)(w_{c_1,c_2}')^{m-1}\left(w_{c_1,c_2}'+2H_m(A)w_{c_1,c_2}''s\right).
\end{align*}
Since
\begin{align*}
\sigma_m(a)\sigma_{k-1;n}(a)&=\left(\sigma_{m;n}(a)+a_n\sigma_{m-1;n}(a)\right)\sigma_{k-1;n}(a)\\
&\geq \sigma_{m-1;n}(a)\sigma_{k;n}(a)+a_n\sigma_{m-1;n}(a)\sigma_{k-1;n}(a)\\
&=\sigma_{m-1;n}(a)\sigma_{k}(a)
\end{align*}
which follows from \eqref{G-eq:sigma-k} and the Newtonian inequality (see \cite{Hardy1952}): 
$$\sigma_{k+1;n}(a)\sigma_{k-1;n}(a)\leq \sigma_{k;n}^2(a),\quad 1\leq k\leq n-1.$$
So that
$$H_m(A)=\frac{\sigma_{m-1;n}(a)a_n}{\sigma_m(a)}\leq \frac{\sigma_{k-1;n}(a)a_n}{\sigma_k(a)}=H_k(A)\quad \text{for }1\le m\le k.$$ 
Hence,
\begin{equation}\label{G-eq:k-convex-sub}
\sigma_m(\lambda(D^2w_{c_1,c_2}))\ge \sigma_m(a)(w_{c_1,c_2}')^{m-1}(w_{c_1,c_2}'+2H_k(A)w_{c_1,c_2}''s).\end{equation}
We claim $w_{c_1,c_2}'+2H_k(A)w_{c_1,c_2}''s>0$ for any $s\geq 1$. Indeed, from  \textbf{(a)} we obtain that
\begin{align*}
\frac{H_k(A)w_{c_1,c_2}''}{w_{c_1,c_2}'}&=-\frac{G(s)}{2(w_{c_1,c_2}')^{k-l}s^{\mathcal{H}+1}}\\
&=-\frac{1}{2s}\frac{\int_1^s\mathcal{H}t^{\mathcal{H}-1}\bar{g}(t)\,dt+c_1-s^{\mathcal{H}}\bar{g}(s)}{\int_1^s\mathcal{H}t^{\mathcal{H}-1}\bar{g}(t)\,dt+c_1}\\
&\geq -\frac{1}{2s}.
\end{align*}
 Consequently, \eqref{G-eq:k-convex-sub} shows $$\sigma_m(\lambda(D^2 w_{c_1,c_2}))>0, \quad 1\leq m\leq k.$$ Namely, $w_{c_1,c_2}$ are uniformly $k$-convex when $c_1>\tilde{C}$.

\textbf{(c)} We determine the asymptotic behavior of $w_{c_1,c_2}(s)$ as $s$ tends to infinity. 

We start by showing that $w_{c_1,c_2}'$ is close to $1$ at infinity in both cases $\mathcal{H}\neq\frac{\beta}{2}$ and $\mathcal{H}=\frac{\beta}{2}$. When $\mathcal{H}\neq\frac{\beta}{2}$, we have
\begin{align}
w_{c_1,c_2}'(s)-1=&\left(s^{-\mathcal{H}}\left(t^{\mathcal{H}}\Big|^s_1+\frac{2C_0\mathcal{H}}{2\mathcal{H}-\beta}t^{\mathcal{H}-\frac{\beta}{2}}\Big|^s_1+c_1\right)\right)^{\frac{1}{k-l}}-1\notag\\
=&\,O\left(s^{-\min\{\frac{\beta}{2},\mathcal{H}\}}\right), \quad \text{as }s\rightarrow\infty.\label{G-eq:O-w'-sub-1}
\end{align}
When $\mathcal{H}=\frac{\beta}{2}$, 
\begin{align}
w_{c_1,c_2}'(s)-1=&\left(s^{-\mathcal{H}}\left((1+C_0t^{\frac{\beta}{2}})t^{\mathcal{H}}\Big|^s_1+\frac{\beta}{2}\ln t\Big|^s_1+c_1\right)\right)^{\frac{1}{k-l}}-1\notag\\
=&O\left(s^{-\mathcal{H}}\ln s\right), \quad \text{as }s\rightarrow\infty.\label{G-eq:O-w'-sub-2}
\end{align}

We now rewrite $w_{c_1,c_2}(s)$ as:
\begin{align*}
w_{c_1,c_2}(s)&=c_2+\int_{s_0}^sw_{c_1,c_2}'(\eta)\,d\eta\\
&=c_2+s-s_0+\int_{s_0}^s\left(w_{c_1,c_2}'(\eta)-1\right)d\eta\\
&=s+\mu(c_1,c_2)-\int_{s}^\infty\left(w_{c_1,c_2}'(\eta)-1\right)d\eta
\end{align*}
where
\begin{equation}\label{G-eq:mu-c1}
\mu(c_1,c_2)=c_2-s_0+\int_{s_0}^\infty\left(w_{c_1,c_2}'(\eta)-1\right)d\eta.
\end{equation}
By \eqref{G-eq:O-w'-sub-1} and \eqref{G-eq:O-w'-sub-2}, $\mu(c_1,c_2)<\infty$ only if $\frac{\beta}{2}>1$ and $ \mathcal{H}>1$, and in this situation,
\begin{equation}\label{G-eq:asym-sub}
\begin{cases}
w_{c_1,c_2}(s)=s+\mu(c_1,c_2)+O(s^{1-\min\{\frac{\beta}{2},\mathcal{H}\}})&\text{if }\mathcal{H}\neq\frac{\beta}{2},\\
w_{c_1,c_2}(s)=s+\mu(c_1,c_2)+O(s^{1-\mathcal{H}}\ln s)&\text{if }\mathcal{H}=\frac{\beta}{2}.
\end{cases}
\end{equation}
This indicates when $\beta>2$ and $A\in\mathscr{A}_{k,l}$, $w_{c_1,c_2}$ given by \eqref{G-eq:ode-sub-solu} is asymptotically close to a quadratic polynomial at infinity.

Based on \textbf{(a)}--\textbf{(c)}, we conclude that
\begin{lemma}\label{G-lem:ode-sub-solu}
Assume $A\in\mathscr{A}_{k,l}$ and $\beta>2$. Then there exists $\tilde{C}>1$ dependent of $A$ and $\beta$, such that when $c_1>\tilde{C}$ the G-Sym function $w_{c_1,c_2}(s)$ given by \eqref{G-eq:ode-sub-solu} is a uniformly $k$-convex solution of problem \eqref{G-eq:ode-sub} and satisfies \eqref{G-eq:asym-sub}.
\end{lemma}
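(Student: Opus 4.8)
The plan is to establish the three assertions of the lemma --- that $w_{c_1,c_2}$ solves the ODE system \eqref{G-eq:ode-sub}, that it is uniformly $k$-convex on $[1,+\infty)$, and that it satisfies the asymptotic expansion \eqref{G-eq:asym-sub} --- by assembling the computations carried out in parts \textbf{(a)}--\textbf{(c)} above. That $w_{c_1,c_2}$ solves the differential equation in \eqref{G-eq:ode-sub} is simply a restatement of the variation-of-constants derivation of \eqref{G-eq:ode-sub-solu}: setting $v:=(w_{c_1,c_2}')^{k-l}=s^{-\mathcal{H}}\bigl(\int_1^s\mathcal{H}t^{\mathcal{H}-1}\bar{g}(t)\,dt+c_1\bigr)$, multiplying by $s^{\mathcal{H}}$, differentiating, and dividing by $s^{\mathcal{H}-1}$ gives $v+\frac{2H_k(A)}{k-l}\,s\,v'=\bar{g}(s)$, which after expanding $v'=(k-l)(w_{c_1,c_2}')^{k-l-1}w_{c_1,c_2}''$ is precisely the first line of \eqref{G-eq:ode-sub}.

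For the sign conditions $w_{c_1,c_2}'>0$ and $w_{c_1,c_2}''\le 0$ on $[1,+\infty)$, I would invoke part \textbf{(a)}: since $\bar{g}\ge 1>0$, the formula for $w_{c_1,c_2}'$ shows it is positive once $c_1>0$; and the explicit primitive $G(s)$ of $\int_1^s\mathcal{H}t^{\mathcal{H}-1}\bar{g}\,dt+c_1-s^{\mathcal{H}}\bar{g}(s)$ is, in each of the three regimes distinguished by the sign of $\mathcal{H}-\frac{\beta}{2}$, monotone on $[1,+\infty)$ with minimal value $c_1-1-C_0$ attained at $s=1$; hence $G>0$ there, so $w_{c_1,c_2}''<0$, as soon as $c_1$ exceeds a threshold $\tilde{C}>1$ depending only on $C_0$ and $\beta$ (and hence only on $A,\beta,g$).

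Uniform $k$-convexity is the most computational step, and I would follow part \textbf{(b)}: using Lemma \ref{G-lem:k-He-G-Sym} for each $\sigma_m(\lambda(D^2w_{c_1,c_2}))$ with $1\le m\le k$, together with the definition of $H_m(A)$, positivity of $\sigma_m$ reduces to that of $w_{c_1,c_2}'+2H_m(A)w_{c_1,c_2}''s$; the Newton inequality $\sigma_{k+1;n}(a)\sigma_{k-1;n}(a)\le\sigma_{k;n}^2(a)$ and identity \eqref{G-eq:sigma-k} yield the monotonicity $H_m(A)\le H_k(A)$ for $m\le k$, so it suffices to show $w_{c_1,c_2}'+2H_k(A)w_{c_1,c_2}''s>0$; and this follows from the bound $\frac{H_k(A)w_{c_1,c_2}''}{w_{c_1,c_2}'}=-\frac{1}{2s}\cdot\frac{\int_1^s\mathcal{H}t^{\mathcal{H}-1}\bar{g}\,dt+c_1-s^{\mathcal{H}}\bar{g}(s)}{\int_1^s\mathcal{H}t^{\mathcal{H}-1}\bar{g}\,dt+c_1}\ge-\frac{1}{2s}$, valid because the fraction lies in $(0,1)$ by part \textbf{(a)}.

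Finally, for the asymptotics I would follow part \textbf{(c)}, where the hypotheses $\beta>2$ and $A\in\mathscr{A}_{k,l}$ enter decisively: $A\in\mathscr{A}_{k,l}$ forces $H_k(A)<\frac{k-l}{2}$, hence $\mathcal{H}=\frac{k-l}{2H_k(A)}>1$, while $\beta>2$ gives $\frac{\beta}{2}>1$. Expanding the primitive inside $w_{c_1,c_2}'$ gives $w_{c_1,c_2}'(s)-1=O(s^{-\min\{\beta/2,\mathcal{H}\}})$ when $\mathcal{H}\neq\frac{\beta}{2}$ and $O(s^{-\mathcal{H}}\ln s)$ when $\mathcal{H}=\frac{\beta}{2}$; since $\min\{\beta/2,\mathcal{H}\}>1$, the tail integral $\int_s^\infty(w_{c_1,c_2}'(\eta)-1)\,d\eta$ converges, and writing $w_{c_1,c_2}(s)=s+\mu(c_1,c_2)-\int_s^\infty(w_{c_1,c_2}'-1)\,d\eta$ with $\mu$ as in \eqref{G-eq:mu-c1} produces \eqref{G-eq:asym-sub}. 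I expect no conceptual obstacle here --- all the analytic estimates are the ones already performed in \textbf{(a)}--\textbf{(c)} --- only the bookkeeping of ensuring that a single $\tilde{C}$, depending just on $A$ and $\beta$, simultaneously secures positivity of $w_{c_1,c_2}'$, negativity of $w_{c_1,c_2}''$, $k$-convexity, and finiteness of $\mu(c_1,c_2)$, together with carrying the three-way case split on $\mathcal{H}$ versus $\frac{\beta}{2}$ through each step.
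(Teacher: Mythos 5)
Your proposal is correct and follows exactly the same line as the paper: the lemma is precisely the synthesis of parts \textbf{(a)}--\textbf{(c)}, and you reproduce each step faithfully, including the variation-of-constants check, the monotonicity of $G$ with minimum $c_1-1-C_0$ at $s=1$, the Newton-inequality argument for $H_m(A)\le H_k(A)$, and the observation that $A\in\mathscr{A}_{k,l}$ and $\beta>2$ give $\min\{\beta/2,\mathcal{H}\}>1$, making the tail integral in $\mu(c_1,c_2)$ finite. No gap.
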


As a consequence of Lemma \ref{G-lem:ode-sub-solu}, we arrive at the following result.

\begin{proposition}\label{G-pro:sub}
Assume that $A\in\mathscr{A}_{k,l}$ is diagonal and $g$ satisfies \eqref{eq:beta-g} for some $\beta>2$. Let $s_0$ be as in \eqref{G-eq:g-upper} and $\tilde{C}$ be given in Lemma \ref{G-lem:ode-sub-solu}. Then when $c_1>\tilde{C}$ the G-Sym function $w_{c_1,c_2}(x)=w_{c_1,c_2}(s)$ given by \eqref{G-eq:ode-sub-solu} is a uniformly $k$-convex subsolution of equation \eqref{eq:pro}  in $\mathbb{R}^n\setminus D(s_0)$ and fulfills
\begin{equation*}
w_{c_1,c_2}(x)=\frac12x^TAx+\mu(c_1,c_2)+O\left(|x|^{2-\min\{\beta, \frac{k-l}{H_k(A)}\}}\right)\quad\text{as }|x|\to+\infty
\end{equation*} 
if $\beta\neq\frac{k-l}{H_k(A)}$, or 
 \begin{equation*}
w_{c_1,c_2}(x)=\frac12x^TAx+\mu(c_1,c_2)+O\left(|x|^{2-\frac{k-l}{H_k(A)}}\ln |x|\right)\quad\text{as }|x|\to+\infty
\end{equation*}
if $\beta=\frac{k-l}{H_k(A)}$, where $\mu(c_1,c_2)$ is as in \eqref{G-eq:mu-c1}.
\end{proposition}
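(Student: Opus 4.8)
The plan is to package together the three blocks \textbf{(a)}, \textbf{(b)}, \textbf{(c)} (already carried out in the text, and summarized in Lemma~\ref{G-lem:ode-sub-solu}) with the subsolution inequality \eqref{G-eq:S-D2w-sub} and the change of variables $s=\frac12x^TAx$. First I would invoke Lemma~\ref{G-lem:ode-sub-solu}: for $A\in\mathscr{A}_{k,l}$ and $\beta>2$ there is $\tilde C>1$ so that whenever $c_1>\tilde C$ the G-Sym function $w_{c_1,c_2}$ of \eqref{G-eq:ode-sub-solu} is defined and $C^2$ on $\{s\ge 1\}$, solves the ODE \eqref{G-eq:ode-sub} (so $w'>0$, $w''\le 0$, and the ODE identity holds for $s>1$), and is uniformly $k$-convex there; note that by the definition of $\mathscr{A}_{k,l}$ one has $H_k(A)<\frac{k-l}{2}$, i.e. $\mathcal H=\frac{k-l}{2H_k(A)}>1$, which together with $\beta>2$ is exactly the condition under which the asymptotic expansion \eqref{G-eq:asym-sub} holds with $\mu(c_1,c_2)<\infty$.

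Next I would translate the ODE solution into a subsolution of the PDE on the exterior region. Since $A$ is diagonal, Lemma~\ref{G-lem:k-He-G-Sym} applies to both $\sigma_k$ and $\sigma_l$ of $D^2w_{c_1,c_2}$; substituting these into $S_{k,l}$ and using $w''\le 0$ to drop the (nonpositive) $w''$-term in the denominator gives, exactly as in the chain \eqref{G-eq:S-D2w-sub}, the bound
\begin{equation*}
S_{k,l}(D^2w_{c_1,c_2})\ \ge\ (w_{c_1,c_2}')^{k-l}+w_{c_1,c_2}''(w_{c_1,c_2}')^{k-l-1}\sum_{i=1}^n\frac{\sigma_{k-1;i}(a)}{\sigma_l(a)}(a_ix_i)^2.
\end{equation*}
One then lower-bounds $\sum_i \sigma_{k-1;i}(a)(a_ix_i)^2\ge 2H_k(A)\,s\,\sigma_k(a)$ (the inequality defining $H_k$, using that the coefficient of each $(a_ix_i)^2$ is at least $\sigma_{k-1;n}(a)\ge\dots$, combined with $\sum_i a_ix_i^2=2s$) and divides by $\sigma_l(a)$, noting $\sigma_k(a)/\sigma_l(a)=S_{k,l}(A)=1$ since $A\in\mathcal A_{k,l}$; since $w'>0$ and $w''\le 0$ this yields $S_{k,l}(D^2w_{c_1,c_2})\ge (w')^{k-l}+2H_k(A)w''(w')^{k-l-1}s=\bar g(s)$ for $s>1$. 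Finally, by \eqref{G-eq:g-upper}, $\bar g(s)\ge g(x)$ for $s\ge s_0$, so $S_{k,l}(D^2w_{c_1,c_2})\ge g$ pointwise on $\mathbb{R}^n\setminus D(s_0)$; because $w_{c_1,c_2}$ is $C^2$ and uniformly $k$-convex there, it is in particular a viscosity (indeed classical) subsolution of \eqref{eq:pro} on $\mathbb{R}^n\setminus D(s_0)$.

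It remains to convert the $s$-asymptotics \eqref{G-eq:asym-sub} into the stated $|x|$-asymptotics. On $\{x: \frac12x^TAx=s\}$ one has $\lambda_{\min}(A)\,|x|^2\le 2s\le \lambda_{\max}(A)\,|x|^2$, so $s\asymp |x|^2$ with constants depending only on $A$; substituting $s^{1-\min\{\beta/2,\mathcal H\}}\asymp |x|^{2-\min\{\beta, (k-l)/H_k(A)\}}$ (using $2\mathcal H=(k-l)/H_k(A)$) into \eqref{G-eq:asym-sub} and absorbing the $|x|^{-\varepsilon}$ factors into the $O(\cdot)$ gives the two displayed expansions, with the logarithmic correction in the borderline case $\beta=(k-l)/H_k(A)$, i.e. $\mathcal H=\beta/2$, coming directly from \eqref{G-eq:O-w'-sub-2}. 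I do not anticipate a genuine obstacle here, since every ingredient is already established in \textbf{(a)}--\textbf{(c)}; the only points requiring a little care are (i) checking that the exceptional sets in Lemma~\ref{G-lem:k-He-G-Sym} are handled correctly when $l=0$ (then the denominator term $w''(w')^{l-1}\sum_i\sigma_{l-1;i}(a)(a_ix_i)^2$ is simply absent and one directly gets $S_{k,0}=\sigma_k(\lambda(D^2w))\ge \bar g(s)$), and (ii) making sure the $O(\cdot)$-constant in the final expansion is uniform in $|x|$, which follows because the $O$-constants in \eqref{G-eq:asym-sub} depend only on $A,\beta,c_1,c_2$ and the comparison $s\asymp|x|^2$ depends only on $A$.
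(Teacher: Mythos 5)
Your overall route is exactly the paper's: Proposition \ref{G-pro:sub} is proved there simply by combining \eqref{G-eq:g-upper}, \eqref{G-eq:S-D2w-sub} and Lemma \ref{G-lem:ode-sub-solu}, and your assembly of these ingredients (including the $s\asymp|x|^2$ conversion, with $2\mathcal H=(k-l)/H_k(A)$, and the observation that for $l=0$ the denominator term is simply absent) is what is intended.

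There is, however, one step that as written is false and would break the chain. You claim the lower bound $\sum_i\sigma_{k-1;i}(a)(a_ix_i)^2\ge 2H_k(A)\,s\,\sigma_k(a)$, justified by saying the coefficient of each $(a_ix_i)^2$ is at least $\sigma_{k-1;n}(a)$. Since $H_k(A)=\max_{1\le i\le n}\sigma_{k-1;i}(a)a_i/\sigma_k(a)$, attained at the largest eigenvalue $a_n$, the inequality that actually holds in general is the \emph{upper} bound $\sigma_{k-1;i}(a)a_i\le H_k(A)\sigma_k(a)$ for every $i$, hence $\sum_i\sigma_{k-1;i}(a)(a_ix_i)^2\le H_k(A)\sigma_k(a)\sum_i a_ix_i^2=2H_k(A)\,s\,\sigma_k(a)=2H_k(A)\,s\,\sigma_l(a)$ (using $\sigma_k(a)=\sigma_l(a)$ from $A\in\mathcal{A}_{k,l}$); your stated lower bound fails unless all $\sigma_{k-1;i}(a)a_i$ coincide. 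The upper bound is also exactly what the argument needs: because $w_{c_1,c_2}''\le 0$ and $w_{c_1,c_2}'>0$, multiplying by the nonpositive factor $w_{c_1,c_2}''(w_{c_1,c_2}')^{k-l-1}$ reverses it and gives $w_{c_1,c_2}''(w_{c_1,c_2}')^{k-l-1}\sum_i\frac{\sigma_{k-1;i}(a)}{\sigma_l(a)}(a_ix_i)^2\ge 2H_k(A)w_{c_1,c_2}''(w_{c_1,c_2}')^{k-l-1}s$, which is the last inequality in \eqref{G-eq:S-D2w-sub} and yields $S_{k,l}(D^2w_{c_1,c_2})\ge\bar g(s)\ge g$ for $s\ge s_0$. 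Feeding your lower bound through the same sign argument would push the estimate in the wrong direction. With this one direction corrected, the remainder of your proof (the monotonicity, $k$-convexity and asymptotics supplied by Lemma \ref{G-lem:ode-sub-solu}, and the passage from \eqref{G-eq:asym-sub} to the stated $|x|$-expansions, with the logarithm in the borderline case $\mathcal H=\beta/2$) is correct and coincides with the paper's argument.
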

\begin{proof}
It follows directly by combining \eqref{G-eq:g-upper}, \eqref{G-eq:S-D2w-sub} and Lemma \ref{G-lem:ode-sub-solu}.
\end{proof}

\begin{remark}\label{G-rk:no-h-l}
 We would like to make some comparisons of Proposition \ref{G-pro:sub} with related results available in literature. For $2\leq k\leq n$ and $l=0$, the G-Sym subsolutions given by Proposition \ref{G-pro:sub} also are previously constructed in \cite{Bao-Li-Zhang-2015,Cao-Bao-2017} for equations \eqref{eq:k-Hessian-g} and \eqref{eq:M-A-g}, while for $l>0$ and $g\equiv 1$ they are different from those obtained in \cite{Li-Li-2018} for equation \eqref{eq:pro} due to the ODE \eqref{G-eq:ode-sub} satisfied by subsolutions disagrees that derived in \cite{Li-Li-2018}. Actually, if $g\equiv1$, then \eqref{eq:pro} becomes $\sigma_{k}(\lambda(D^2u))=\sigma_l(\lambda(D^2u))$, from which via \eqref{G-eq:k-H-G-Sym} one can study the following ODE
\begin{equation}\label{G-eq:ode-sub-1}
(w')^{k}+2H_k(A)w''(w')^{k-1}s=(w')^{l-l}+2h_l(A)w''(w')^{l-1}s
\end{equation}
to seek subsolutions of \eqref{eq:pro}, where $h_l(A)\in[0,\frac{l}{n}]$ is defined by $$h_l(A)=\min_{1\leq i\leq n}\frac{\sigma_{l-1;i}(\lambda(A))\lambda_i(A)}{\sigma_l(\lambda(A))}.$$ Here introducing the quantities $H_k(A)$ and $h_l(A)$ at the same time is to strike a balance between different order of homogeneities and then \eqref{G-eq:ode-sub-1} could be analyzed explicitly as in \cite{Li-Li-2018}. However, whenever $g\not\equiv1$, the same idea is not helpful for removing the effect of irreducible fraction in \eqref{eq:pro}. In order to treat this, here we drop part of the denominator $\sigma_l(\lambda(D^2w))$ as handled in \eqref{G-eq:S-D2w-sub} (equivalent to putting $h_l(A)=0$ in \eqref{G-eq:ode-sub-1}). As a result, our admissible set of $A$, $\mathscr{A}_{k,l}$, may be a subset of the one in \cite{Li-Li-2018} which is given by
$$\tilde{\mathscr{A}}_{k,l}:=\left\{A\in\mathcal{A}_{k,l}: H_k(A)-h_l(A)<\frac{k-l}{2}\right\}.$$
Nevertheless, by Proposition \ref{G-pro:A-k-l} our method still works well in cases $k-l\geq2$ where it holds $$\mathscr{A}_{k,l}=\tilde{\mathscr{A}}_{k,l}=\mathcal{A}_{k,l}.$$
\end{remark}

\subsection{Generalized symmetric supersolutions}\label{G-sec:sup}
We will construct a family of G-Sym supersolutions of \eqref{eq:pro} coinciding at infinity with the G-Sym subsolutions we just obtained in Proposition \ref{G-pro:sub}, which is necessary in Perron's construction (Theorem \ref{thm:Perron-m}). Unlike the well-treated case $g\equiv 1$ in which one can directly pick $\frac12x^TAx+c$ to be desired supersolutions (see for example \cite{Bao-Li-Li-2014,Caffarelli-Li-2003,Jiang-Li-Li-2020,Li-Li-2018,Li-Bao-2014}), here we adapt the idea in Subsection \ref{G-sec:sub} to study the ODE:  
\begin{equation}\label{G-eq:ode-super}
\begin{cases}
(w')^{k-l}+2H_k(A)w''(w')^{k-l-1}s=\underline{g}(s),& s>1,\\
w'(s)>0, \quad w''(s)\geq0,&s\geq1,
\end{cases}
\end{equation}
where $\underline{g}$ is an increasing smooth function of $s$ satisfying, without loss of generality,
\begin{equation}\label{G-eq:g-lower}
0<\underline{g}\leq g\text{ for}\ s\geq 1,\quad\text{and}\quad\underline{g}=1-C_0s^{-\frac{\beta}{2}}\ \text{when }s\geq s_0,
\end{equation}
where $C_0$ and $s_0$ are as in \eqref{G-eq:g-upper}. Replacing $\bar{g}$ by $\underline{g}$ in \eqref{G-eq:ode-sub-solu} and taking $c_1=0$, one can easily obtain the following uniformly $k$-convex solutions of \eqref{G-eq:ode-super}:
\begin{equation}\label{G-eq:ode-super-solu}
\overline{w}_{c_2}(s):=c_2+\int_{1}^s\left(\eta^{-\mathcal{H}}\left(\int_1^\eta\mathcal{H}t^{\mathcal{H}-1}\underline{g}(t)\,dt\right)\right)^{\frac{1}{k-l}}d\eta
\end{equation}
Moreover, thanks to \eqref{G-eq:g-lower}, arguing as in part \textbf{(c)} before we infer that $\overline{w}_{c_2}$ possess asymptotics \eqref{G-eq:asym-sub} where $\mu(c_1,c_2)$ need to be replaced by
\begin{equation}\label{G-eq:lmu-c1-c2}
\overline{\mu}(c_2)=c_2-1+\int_{1}^\infty\left(\overline{w}_{c_2}'(\eta)-1\right)d\eta.
\end{equation}

Similarly as showed in \eqref{G-eq:S-D2w-sub}, we can find for $s\geq 1$, $$S_{k,l}(D^2\overline{w}_{c_2})\leq (\overline{w}_{c_2}')^{k-l}+2H_k(A)\overline{w}_{c_2}''(\overline{w}_{c_2}')^{k-l-1}s=\underline{g}(s)\leq g.$$
Namely, $\overline{w}_{c_2}$ are supersolutions of \eqref{eq:pro}.

The following statement is a summary of above facts, which serves as a counterpart of Proposition \ref{G-pro:sub}.
\begin{proposition}\label{G-pro:super}
Assume that $A\in\mathscr{A}_{k,l}$ is diagonal and $g$ satisfies \eqref{eq:beta-g} for some $\beta>2$. Let $c_2\in\mathbb{R}$ and $s=\frac12x^TAx$. Then the G-Sym function $\overline{w}_{c_2}(x)=\overline{w}_{c_2}(s)$ given by \eqref{G-eq:ode-super-solu} is a uniformly $k$-convex supersolution of equation \eqref{eq:pro} in $\mathbb{R}^n\setminus D(1)$ and fulfills
\begin{equation*}
\overline{w}_{c_2}(x)=\frac12x^TAx+\overline{\mu}(c_2)+O\left(|x|^{2-\min\{\beta, \frac{k-l}{H_k(A)}\}}\right)\quad\text{as }|x|\to+\infty
\end{equation*} 
if $\beta\neq\frac{k-l}{H_k(A)}$, or 
 \begin{equation*}
\overline{w}_{c_2}(x)=\frac12x^TAx+\overline{\mu}(c_2)+O\left(|x|^{2-\frac{k-l}{H_k(A)}}\ln |x|\right)\quad\text{as } |x|\to+\infty
\end{equation*}
if $\beta=\frac{k-l}{H_k(A)}$, where $\overline{\mu}(c_2)$ is as in \eqref{G-eq:lmu-c1-c2}.
\end{proposition}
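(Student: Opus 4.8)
The plan is to follow the three‑step scheme of parts \textbf{(a)}--\textbf{(c)} of Subsection~\ref{G-sec:sub} almost verbatim, replacing $\bar g$ by $\underline g$ and setting $c_1=0$ throughout; the hypotheses $\beta>2$ and $A\in\mathscr{A}_{k,l}$ enter at precisely the same places as there. The supersolution inequality $S_{k,l}(D^2\overline{w}_{c_2})\le g$ on $\{s>1\}$ has already been recorded in the displayed inequality immediately preceding the statement, so the remaining points are: (i) that $\overline{w}_{c_2}$ from \eqref{G-eq:ode-super-solu} is a $C^2$ solution of \eqref{G-eq:ode-super} on $(1,+\infty)$, in particular $\overline{w}_{c_2}'>0$ and $\overline{w}_{c_2}''\ge 0$; (ii) uniform $k$-convexity; and (iii) the quadratic asymptotics.

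For (i), differentiating \eqref{G-eq:ode-super-solu} gives $\overline{w}_{c_2}'(s)=\big(s^{-\mathcal{H}}\int_1^s\mathcal{H} t^{\mathcal{H}-1}\underline g(t)\,dt\big)^{1/(k-l)}$, which is positive for $s>1$ since $\underline g>0$ and $\mathcal{H}>0$; a second differentiation, carried out exactly as in part \textbf{(a)}, yields $\overline{w}_{c_2}''(s)=-\frac{s^{-\mathcal{H}-1}}{2H_k(A)}(\overline{w}_{c_2}')^{1-k+l}\big(\int_1^s\mathcal{H} t^{\mathcal{H}-1}\underline g(t)\,dt-s^{\mathcal{H}}\underline g(s)\big)$, and since $\underline g$ is nondecreasing one has $\int_1^s\mathcal{H} t^{\mathcal{H}-1}\underline g(t)\,dt\le\underline g(s)(s^{\mathcal{H}}-1)\le s^{\mathcal{H}}\underline g(s)$, so the bracket is nonpositive and $\overline{w}_{c_2}''\ge 0$; substituting these back confirms \eqref{G-eq:ode-super}. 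For (ii), since $\overline{w}_{c_2}''\ge 0$ and each $\sigma_{m-1;i}(a)$ is positive ($a=\lambda(A)$ has positive entries), Lemma~\ref{G-lem:k-He-G-Sym} gives for $1\le m\le k$ that $\sigma_m(\lambda(D^2\overline{w}_{c_2}))=\sigma_m(a)(\overline{w}_{c_2}')^m+\overline{w}_{c_2}''(\overline{w}_{c_2}')^{m-1}\sum_i\sigma_{m-1;i}(a)(a_ix_i)^2\ge\sigma_m(a)(\overline{w}_{c_2}')^m>0$; this is more immediate than part \textbf{(b)}, which invoked $H_m(A)\le H_k(A)$ only in order to offset the negative sign of $w''$ occurring in the subsolution case.

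For (iii), I would repeat part \textbf{(c)}: inserting $\underline g=1-C_0s^{-\beta/2}$ (valid for $s\ge s_0$) into the formula for $\overline{w}_{c_2}'$ and integrating explicitly gives, as $s\to\infty$, $\overline{w}_{c_2}'(s)-1=O(s^{-\min\{\beta/2,\mathcal{H}\}})$ when $\mathcal{H}\ne\beta/2$ and $O(s^{-\mathcal{H}}\ln s)$ when $\mathcal{H}=\beta/2$. Because $\beta>2$ and $A\in\mathscr{A}_{k,l}$ force $\beta/2>1$ and $\mathcal{H}=\frac{k-l}{2H_k(A)}>1$, the tail $\int_s^\infty(\overline{w}_{c_2}'(\eta)-1)\,d\eta$ converges, of order $s^{1-\min\{\beta/2,\mathcal{H}\}}$ (resp. $s^{1-\mathcal{H}}\ln s$); writing $\overline{w}_{c_2}(s)=s+\overline{\mu}(c_2)-\int_s^\infty(\overline{w}_{c_2}'(\eta)-1)\,d\eta$ with $\overline{\mu}(c_2)$ as in \eqref{G-eq:lmu-c1-c2}, and passing from powers of $s$ to powers of $|x|$ via $\frac12\lambda_1(A)|x|^2\le s\le\frac12\lambda_n(A)|x|^2$ together with $2\mathcal{H}=\frac{k-l}{H_k(A)}$, produces the two asserted expansions. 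Combined with \eqref{G-eq:g-lower} and the displayed inequality before the statement, this finishes the proof.

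I do not expect a genuine obstacle, since the argument is a mirror image of Subsection~\ref{G-sec:sub}; the point that must be handled with care is the sign bookkeeping, because $\overline{w}_{c_2}$ is convex in $s$ ($\overline{w}_{c_2}''\ge 0$) whereas the subsolution was concave in $s$, so several of the inequalities reverse direction. Concretely, in the chain analogous to \eqref{G-eq:S-D2w-sub} one now discards the nonnegative term $\overline{w}_{c_2}''(\overline{w}_{c_2}')^{l-1}\sum_i\sigma_{l-1;i}(a)(a_ix_i)^2$ from the \emph{denominator} of $S_{k,l}(D^2\overline{w}_{c_2})$, which turns the estimate into an \emph{upper} bound, and then combines $\sum_i\frac{\sigma_{k-1;i}(a)}{\sigma_l(a)}(a_ix_i)^2\le 2H_k(A)s$ (still valid because $\sigma_k(a)=\sigma_l(a)$ for $A\in\mathcal{A}_{k,l}$) with $\overline{w}_{c_2}''\ge 0$ to arrive at $S_{k,l}(D^2\overline{w}_{c_2})\le\underline g\le g$ on $\{s>1\}$; this is precisely the content of the displayed line before the statement, so no new difficulty arises.
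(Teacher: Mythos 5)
Your proposal is correct and follows essentially the same route as the paper, which likewise obtains Proposition \ref{G-pro:super} by rerunning parts \textbf{(a)}--\textbf{(c)} of Subsection \ref{G-sec:sub} with $\bar g$ replaced by $\underline g$ and $c_1=0$, and by reversing the inequalities in \eqref{G-eq:S-D2w-sub} to get $S_{k,l}(D^2\overline{w}_{c_2})\le\underline g\le g$. Your sign bookkeeping (using monotonicity of $\underline g$ to get $\overline{w}_{c_2}''\ge 0$, the now-immediate $k$-convexity, and discarding the nonnegative term from the denominator) is exactly what the paper's "similarly/arguing as before" steps amount to.
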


\begin{remark}
One may wonder whether there are smooth G-Sym solutions of \eqref{eq:pro}, provided that $g$ is of G-Sym. Generally, this cannot be expected for all $0\leq l<k\leq n$. When $g\equiv 1$, \cite[Proposition 1.1]{Li-Li-Zhao-2019} states that \eqref{eq:pro} admits a G-Sym solution of $C^2$ with respect to a diagonal $A\in\mathcal{A}_{k,l}$ if and only if $l=0$ and $k=n$, unless $A=c^*(k,l)I$ (see (i) of Proposition \ref{G-pro:A-k-l}). The same rigidity result was also exploited in \cite{Cao-Bao-2017} for \eqref{eq:k-Hessian-g} with $g=\bar{g}$, introduced in \eqref{G-eq:g-upper}. That is the reason why we here look for G-Sym subsolutions and G-Sym supersolutions of \eqref{eq:pro} separately.
\end{remark}

\section{Proof of Theorem \ref{thm:main}} \label{sec:3}
In view of Theorem \ref{thm:Perron-m}, to prove Theorem \ref{thm:main} it suffices to demonstrate the existence of a viscosity subsolution $\underline{u}$ of equation \eqref{eq:pro} attaining the prescribed boundary value and asymptotic behavior at infinity, as well as that of a viscosity supersolution $\bar{u}\geq\underline{u}$ but agreeing on $\underline{u}$ at infinity. Roughly speaking, such a subsolution can be obtained by splicing together the supremum of barriers over the boundary points of the domain and the G-Sym subsolution constructed in Proposition \ref{G-pro:sub}; the desired supersolution is prepared in Proposition \ref{G-pro:super}. The uniqueness of the solution is guaranteed by comparison principle, Corollary \ref{thm:comparison-ub}.

Set $\mathcal{G}=\sup_{\mathbb{R}^n\setminus D} g$. To process the boundary behavior of the solution, we need the following existence result of barrier functions.  

\begin{lemma}\label{P-lem:w-xi}
Let $D$ be a bounded strictly convex domain of $\mathbb{R}^n$ ($n\geq3$) with $\partial D\in C^2$ and let $\varphi\in C^2(\partial D)$. For an invertible and symmetric matrix $A$, there exists some constant $C$, depending only on $n,\mathcal{G},\|\varphi\|_{C^2(\partial D)}$, the upper bound of $A$, the diameter and the convexity of $D$, and the $C^2$ norm of $\partial D$, such that for every $\xi\in\partial D$, there exists $\bar{x}(\xi)\in\mathbb{R}^n$ satisfying $$|\bar{x}(\xi)|\leq C\quad\text{and}\quad w_\xi<\varphi\quad\text{on}\quad \partial D\setminus\{\xi\},$$ where $$w_\xi(x)=\varphi(\xi)+\frac{\mathcal{G}^{\frac{1}{k-l}}}{2}\left[(x-\bar{x}(\xi))^TA(x-\bar{x}(\xi))-(\xi-\bar{x}(\xi))^TA(\xi-\bar{x}(\xi))\right]$$
for $x\in\mathbb{R}^n$.
\end{lemma}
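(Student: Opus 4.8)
The plan is to check directly the only substantive claim, namely $w_\xi(x)<\varphi(x)$ for $x\in\partial D\setminus\{\xi\}$, since $w_\xi(\xi)=\varphi(\xi)$ holds automatically from the definition of $w_\xi$. Put $M:=\mathcal{G}^{1/(k-l)}$ and $y:=x-\xi$; expanding and using the symmetry of $A$,
$$(x-\bar x)^{T}A(x-\bar x)-(\xi-\bar x)^{T}A(\xi-\bar x)=2(\xi-\bar x)^{T}Ay+y^{T}Ay ,$$
so the inequality $w_\xi<\varphi$ on $\partial D\setminus\{\xi\}$ is equivalent to
$$\varphi(x)-\varphi(\xi)>M(\xi-\bar x)^{T}Ay+\tfrac{M}{2}\,y^{T}Ay ,\qquad x\in\partial D\setminus\{\xi\}.$$
First I would fix a $C^{2}$ extension of $\varphi$ to $\mathbb{R}^{n}$ (still written $\varphi$) whose $C^{2}$ norm is controlled by $\|\varphi\|_{C^{2}(\partial D)}$ and the $C^{2}$ norm of $\partial D$, so that $\varphi(x)-\varphi(\xi)\ge\nabla\varphi(\xi)\cdot y-\tfrac{L}{2}|y|^{2}$ with $L:=\|D^{2}\varphi\|_{L^{\infty}}$. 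Since $y^{T}Ay\le\|A\|\,|y|^{2}$, where $\|A\|$ denotes the operator norm, it then suffices to choose $\bar x=\bar x(\xi)$ so that $v(\xi):=\nabla\varphi(\xi)-MA(\xi-\bar x(\xi))$ satisfies
$$v(\xi)\cdot y>C_{2}\,|y|^{2}\quad\text{for every }x\in\partial D\setminus\{\xi\},\qquad C_{2}:=\tfrac{L}{2}+\tfrac{M}{2}\|A\| .$$

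The geometric ingredient is a uniform quadratic strict-convexity estimate for $D$: there is $c_{1}>0$, depending only on the diameter and the modulus of strict convexity of $D$ and on the $C^{2}$ norm of $\partial D$, such that
$$\nu(\xi)\cdot(x-\xi)\ge c_{1}|x-\xi|^{2}\qquad\text{for all }\xi,x\in\partial D ,$$
where $\nu(\xi)$ is the interior unit normal at $\xi$; this is obtained by combining the local paraboloid bound furnished by the uniformly positive second fundamental form of $\partial D$ (valid for $|x-\xi|$ below a uniform threshold) with a compactness argument for the pairs $(\xi,x)$ with $|x-\xi|$ bounded away from $0$. Granting it, I would simply take $v(\xi):=\mu\,\nu(\xi)$ with $\mu:=(C_{2}+1)/c_{1}$, so that $v(\xi)\cdot y\ge\mu c_{1}|y|^{2}>C_{2}|y|^{2}$ whenever $y\neq0$; here $\mu$ depends only on $n,\mathcal{G},\|\varphi\|_{C^{2}(\partial D)}$, $A$, and the geometry of $D$. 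Solving $\nabla\varphi(\xi)-MA(\xi-\bar x(\xi))=\mu\,\nu(\xi)$, which is possible because $A$ is invertible, yields
$$\bar x(\xi)=\xi-\frac{1}{M}A^{-1}\bigl(\nabla\varphi(\xi)-\mu\,\nu(\xi)\bigr),$$
and since $D$ is bounded and $\mathcal{G}<\infty$ this gives $|\bar x(\xi)|\le C$ with $C$ uniform in $\xi$ and depending only on the stated data.

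The argument is thus essentially bookkeeping once the $C^{2}$ Taylor estimate for $\varphi$ and the uniform quadratic convexity bound for $\partial D$ are recorded, both of which are routine. The one point that genuinely requires care is the uniformity in the base point $\xi$: one must ensure that the convexity constant $c_{1}$, and hence $\mu$ and the final $C$, can be taken independent of $\xi$, which is precisely what the compactness of $\partial D$ together with its uniform $C^{2}$ bounds provide. (The choice $M=\mathcal{G}^{1/(k-l)}$ plays no role in this lemma, but it is exactly what makes $w_\xi$ a classical subsolution of \eqref{eq:pro} when $A\in\mathscr{A}_{k,l}$, since then $S_{k,l}(D^{2}w_\xi)=M^{k-l}S_{k,l}(A)=\mathcal{G}\ge g$; this will be used when $w_\xi$ is spliced with the generalized symmetric subsolution of Proposition \ref{G-pro:sub}.)
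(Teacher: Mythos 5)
Your construction is correct, and it is essentially the argument that the paper itself omits: the paper's ``proof'' of Lemma \ref{P-lem:w-xi} is only a citation of \cite[Lemma 5.1]{Caffarelli-Li-2003}, \cite[Lemma 3.1]{Bao-Li-Li-2014} and \cite{Cao-Bao-2017}, and those proofs run exactly along your lines: expand the quadratic difference, reduce to $\bigl(\nabla\varphi(\xi)-MA(\xi-\bar x)\bigr)\cdot y>C_2|y|^2$, and kill the quadratic error terms with the uniform convexity estimate $\nu(\xi)\cdot(x-\xi)\geq c_1|x-\xi|^2$ by taking $MA(\xi-\bar x)=\nabla\varphi(\xi)-\mu\nu(\xi)$ with $\mu$ large. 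Two caveats are worth recording. First, your quadratic convexity bound needs ``strictly convex'' to be read as uniformly convex (principal curvatures of $\partial D$ bounded below by a positive constant); with mere no-segment strict convexity and $\partial D\in C^2$ the bound can fail (e.g.\ a boundary point where $\partial D$ osculates to fourth order), and then no bounded $\bar x(\xi)$ exists at all, so this reading is exactly what ``the convexity of $D$'' in the statement must quantify --- you implicitly assume it, and should say so. Second, your explicit choice $\bar x(\xi)=\xi-\frac{1}{M}A^{-1}\bigl(\nabla\varphi(\xi)-\mu\nu(\xi)\bigr)$ makes the constant $C$ depend on $\|A^{-1}\|$ as well, which is not in the lemma's stated list of dependencies (only ``the upper bound of $A$''); so you should not claim $C$ depends ``only on the stated data.'' This extra dependence is harmless for the proof of Theorem \ref{thm:main}, where $A\in\mathscr{A}_{k,l}$ is fixed, and in fact for a general invertible symmetric $A$ with small eigenvalues some dependence beyond the upper bound of $A$ is unavoidable (a nearly singular diagonal $A$ on the unit ball forces $|\bar x(\xi)|\to\infty$), so the discrepancy reflects loose phrasing of the lemma rather than a defect of your argument --- but it should be stated explicitly.
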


\begin{proof}
As proved in \cite{Cao-Bao-2017} for $A\in\mathcal{A}_{k,0}$, it is a direct adaption of the arguments as in the proofs of \cite[Lemma 5.1]{Caffarelli-Li-2003} and \cite[Lemma 3.1]{Bao-Li-Li-2014} for the case $\mathcal{G}=1$. We thus omit it. 
\end{proof}
We now present the proof of Theorem \ref{thm:main}. 

\begin{proof}[Proof of Theorem \ref{thm:main}]
By an orthogonal transformation and by subtracting a linear function from $u$, we need only to prove for $A=\text{diag}(a_1,a_2,\cdots,a_n)$ and $b=0$; see \cite[Lemma 3.3]{Li-Li-2018} for a specific demonstration. Also, without loss of generality, we assume $D(1)\subset D\subset D(s_0)$, where $D(\cdot)$ is as in \eqref{G-eq:D-s} and $s_0$ is as in \eqref{G-eq:g-upper}. 

\textbf{Step 1} We first construct a viscosity subsolution $\underline{u}$ of \eqref{eq:pro} with $\underline{u}=\varphi$ on $\partial D$ and the asymptotics as in \eqref{eq:asym-thm}.

Recalling $w_{c_1,c_2}(s)$ given in \eqref{G-eq:ode-sub-solu}, by Proposition \ref{G-pro:sub} it is a uniformly $k$-convex subsolution of \eqref{eq:pro} satisfying
\begin{equation}\label{P-eq:asym-E}
w_{c_1,c_2}(x)=\frac12x^TAx+\mu(c_1,c_2)+O\left(E^{-1}(x)\right),\quad |x|\to\infty,
\end{equation}
 provided $c_1>\tilde{C}$ and $x\in\mathbb{R}^n\setminus D(s_0)$. Here $E(x)$ is defined in \eqref{eq:intro-E}. We later will pick suitable $c_1$ to make \eqref{P-eq:asym-E} reach asymptotics \eqref{eq:asym-thm} as in \eqref{P-eq:mu-c1-c} below. Now regarding the boundary value, we are going to find another viscosity subsolution $\underline{w}$ attaining $\varphi$ on $\partial D$, such that, for some fixed $\bar{s}>s_0$, 
\begin{equation}\label{P-eq:barrier}
\max_{\partial D(s_0)}w_{c_1,c_2}\leq\min_{\partial D(s_0)}\underline{w}\quad\text{and}\quad\min_{\partial D(\bar{s})}w_{c_1,c_2}\geq\max_{\partial D(\bar{s})}\underline{w}.
\end{equation} 
For this aim, we set $$\underline{w}(x)=\max\{w_{\xi}(x)~|~\xi \in \partial D\},$$ where $w_\xi$ is introduced in Lemma \ref{P-lem:w-xi}. Clearly, $\underline{w}=\varphi$ on $\partial D$.  Since $$S_{k,l}(D^2w_\xi)=\mathcal{G}S_{k,l}(A)=\mathcal{G}\geq g\quad\text{in }\mathbb{R}^n\setminus\overline{D},$$
$w_\xi$ is a smooth convex subsolution of \eqref{eq:pro}. By Lemma \ref{lem:Perron-sub}, $\underline{w}$ is also a viscosity subsolution of \eqref{eq:pro}. Let 
$$c_2=m_{s_0}:=\min\{w_{\xi}(x)~|~\xi\in\partial D, x\in \overline{D(s_0)}\setminus D\}.$$
Thus, by \eqref{G-eq:ode-sub-solu} $w_{c_1,m_{s_0}}$ satisfies the first condition in \eqref{P-eq:barrier}. To realize the second one, it suffices to choose a large $c_1$ (assume $c_1\geq\alpha>\tilde{C}$) since $w_{c_1,m_{s_0}}$ is monotonically increasing with respect to $c_1$.

We next fix $c_*$ such that given $c>c_*$ one can find $c_1(c)$ to fulfill 
\begin{equation}\label{P-eq:mu-c1-c}
\mu(c_1(c),m_{s_0})=c.
\end{equation}
Notice from \eqref{G-eq:mu-c1} that $\mu(c_1,m_{s_0})$ is strictly increasing in $c_1$, and $$\lim_{c_1\to+\infty}\mu(c_1,m_{s_0})=+\infty.$$ Hence, if $c_*\geq\mu(\alpha,m_{s_0})$, then such $c_1(c)>\alpha$ exists. This means that, for $c>c_*$, $w_{c_1(c),m_{s_0}}$ satisfies \eqref{P-eq:barrier} and possesses asymptotic behavior:
\begin{equation}\label{P-eq:asym-sub}
w_{c_1(c),m_{s_0}}(x)=\frac12x^TAx+c+O\left(E^{-1}(x)\right),\quad |x|\to\infty.
\end{equation}

For $c>c_*$, we define
\begin{equation*}
\underline{u}(x)=
\begin{cases}
\underline{w}(x),\quad &x\in D(s_0)\backslash D,\\
\max\{w_{c_1(c),m_{s_0}}(x),\underline{w}(x)\},\quad &x\in D(\bar{s})\backslash D(s_0),\\
w_{c_1(c),m_{s_0}}(x),&x\in\mathbb{R}^n\backslash D(\bar{s}).
\end{cases}
\end{equation*}
Then from Definition \ref{app-def:visc} and Lemma \ref{lem:Perron-sub}, $\underline{u}$ is a viscosity subsolution of \eqref{eq:pro} satisfying \eqref{P-eq:asym-sub}, and $\underline{u}=\underline{w}=\varphi$ on $\partial D$.

\textbf{Step 2}
We construct a viscosity supersolution $\bar{u}$ of \eqref{eq:pro} to satisfy 
\begin{equation}\label{P-eq:step-super}
\underline{u}\leq\bar{u}\ \text{ in }\mathbb{R}^n\setminus\overline{D}\quad\text{and}\quad\lim_{|x|\to\infty}(\bar{u}-\underline{u})(x)=0.
\end{equation}

  By Proposition \ref{G-pro:super}, $\overline{w}_{c_2}$ given in \eqref{G-eq:ode-super-solu} is a uniformly $k$-convex supersolution of \eqref{eq:pro} satisfying
\begin{equation}\label{P-eq:asym-super}
\overline{w}_{c_2}(x)=\frac{1}{2}x^TAx+\overline{\mu}(c_2)+O\left(E^{-1}(x)\right),\quad |x|\to\infty.
\end{equation}
Then for $c>c_*$ the second condition in \eqref{P-eq:step-super} holds for $\overline{w}_{c_2(c)}$ where $c_2(c)$ is determined by $$\overline{\mu}(c_2(c))=c,$$ which implies \eqref{P-eq:asym-super} agrees with \eqref{P-eq:asym-sub}. Actually, by definition of $\overline{\mu}$ (see \eqref{G-eq:lmu-c1-c2}), $$c_2(c)=c+1-\int_1^\infty\left[\left(s^{-\mathcal{H}}\int_1^s\mathcal{H}t^{\mathcal{H}-1}\underline{g}(t)\,dt\right)^{\frac{1}{k-l}}-1\right]\,ds\geq c\,;$$
here we have used the fact that $\underline{g}\leq 1$ for $s\geq 1$. Next, with the help of comparison principles proved in Appendix \ref{sec:comparison}, we show that $\overline{w}_{c_2(c)}$ also agrees the first condition in \eqref{P-eq:step-super} for proper $c$.

In \textbf{Step 1}, we fixed $c_*\geq\mu(\alpha,m_{s_0})$. We now further require that $c_*>M_{s_0}$, where 
$$M_{s_0}:=\max\{w_{\xi}(x)~|~\xi\in\partial D, x\in \overline{D(s_0)}\setminus D\}.$$
Then for $c>c_*$, 
$$\overline{w}_{c_2(c)}\geq c_2(c)\geq c>c_*>M_{s_0}\geq m_{s_0}\geq w_{c_1(c),m_{s_0}}\quad\text{on }\partial D(s_0),$$
and also $$\lim_{|x|\to\infty}(\overline{w}_{c_2(c)}-w_{c_1(c),m_{s_0}})(x)=0.$$
Applying Corollary \ref{thm:comparison-ub}, we thus deduce that 
\begin{equation}\label{P-eq:compare-super-sub}
\overline{w}_{c_2(c)}\geq w_{c_1(c),m_{s_0}}\quad\text{in }\mathbb{R}^n\setminus\overline{D(s_0)}.
\end{equation}
On the other hand, 
$$\overline{w}_{c_2(c)}\geq c_2(c)\geq c>c_*>M_{s_0}\geq \underline{w}\quad\text{on }\partial D,$$
and by \eqref{P-eq:barrier}, $$\overline{w}_{c_2(c)}\geq w_{c_1(c),m_{s_0}}\geq\underline{w}\quad\text{on }\partial D(\bar{s}).$$
Hence, applying Theorem \ref{thm:comparison-b} yields 
\begin{equation}\label{P-eq:compare-super-barrier} 
\overline{w}_{c_2(c)}\geq\underline{w}\quad\text{in }D(\bar{s})\setminus\overline{D}.
\end{equation}
By virtue of \eqref{P-eq:compare-super-sub} and \eqref{P-eq:compare-super-barrier}, we get $$\overline{w}_{c_2(c)}\geq \underline{u}\quad\text{in }\mathbb{R}^n\setminus\overline{D}$$ for $c>c_*$.

Based on the above arguments, we let $\bar{u}=\overline{w}_{c_2(c)}$ for $c>c_*$, which is a desired viscosity supersolution of \eqref{eq:pro}.

\textbf{Step 3} We show the existence and uniqueness of viscosity solutions to problem \eqref{eq:asym-thm}.

With $\underline{u}$ and $\bar{u}$, we define 
\begin{align*}
u(x):=&\sup\{v(x)| v\in\mathrm{USC}(\mathbb{R}^n\setminus\overline{D}), S_{k,l}(D^2v)\geq g\text{ in }\mathbb{R}^n\setminus\overline{D}\text{ in the}\\
&\text{viscosity sense, with }\underline{u}\leq v\leq\bar{u}\text{ in }\mathbb{R}^n\setminus\overline{D}\text{ and }v=\varphi\text{ on }\partial D\}.
\end{align*}
Since $\underline{u}$ and $\bar{u}$ both satisfy \eqref{P-eq:asym-sub}, one has
$$\limsup_{|x|\to\infty}\left(E(x)\Big{|}u(x)-(\frac12x^TAx+c)\Big{|}\right)<\infty.$$
From Theorem \ref{thm:Perron-m}, we thus conclude that $u\in C^0(\mathbb{R}^n\setminus D)$ is a viscosity solution of \eqref{eq:asym-thm}. 

Finally, the uniqueness of viscosity solutions to problem \eqref{eq:asym-thm} is forced by Corollary \ref{thm:comparison-ub}. This completes the proof. 
\end{proof}

\begin{remark}
We remark that from the above demonstration the lowerbound $c_*$ of $c$ in Theorem \ref{thm:main} may not be removed. Actually, in \cite[Theorem 1.3]{Li-Lu-2018}, Li and Lu proved that when $l=0$ and $k=n$, there is a sharp constant $c_*$ such that problem \eqref{eq:asym-thm} admits a viscosity solution if and only if $c\geq c_*$. It would be interesting to see whether 
such a sharp charaterization result holds for equation \eqref{eq:pro} with general $0\leq l<k\leq n$ and will be left to study in our future work.
\end{remark}

\appendix
\section{Comparison principles for viscosity solutions}\label{sec:comparison}
In this appendix, we prove comparison principles for viscosity subsolutions and viscosity supersolutions of fully nonlinear, second-order partial differential equations of the form
\begin{equation}\label{app-eq:F-x-D2u}
F(x,D^2u):=f(\lambda(D^2u))-g(x)=0\quad\text{in }\Omega,
\end{equation}
where $\Omega$ is an open subset of $\mathbb{R}^n$, $f$ is a symmetric function of $C^1$ defined on an open convex symmetric cone $\Gamma$ in $\mathbb{R}^n$, with vertex at the origin and containing the positive cone $\Gamma^+$, $\lambda(D^2u)=(\lambda_1,\cdots,\lambda_n)$ denotes the eigenvalue vector of the Hessian matrix $D^2u$, and $g\in C^0(\Omega)$ is a given function with $\inf_{\Omega}g>0$. Throughout the section, we assume that 
\begin{equation}\label{app-eq:f-increase}
\frac{\partial f}{\partial\lambda_i}>0\quad \text{on }\Gamma,\quad i=1,\cdots,n, 
\end{equation} 
by which $F(x,D^2u)$ in \eqref{app-eq:F-x-D2u} is an elliptic operator on those functions $u\in C^2(\Omega)$ such that $\lambda(D^2u)\in\Gamma$. Such functions will be called admissible. In addition, it is assumed 
\begin{equation}\label{app-eq:f-nu}
\sum_{i=1}^n\lambda_i\frac{\partial f}{\partial\lambda_i}\geq \nu(f) \quad\text{on }\Gamma
\end{equation}
and 
\begin{equation}\label{app-eq:f-boundary}
\limsup_{\lambda\to\lambda_0} f(\lambda)<\inf_{\Omega} g \quad\text{for }\forall\,\lambda_0\in\partial\Gamma,
\end{equation}
where $\nu$ is some positive increasing function on $\mathbb{R}$. Clearly, the quotient of elementary symmetric functions $\sigma_k/\sigma_l$ $(0\leq l<k\leq n)$ involved in \eqref{eq:pro} is an example of $f$ fulfilling \eqref{app-eq:f-increase}--\eqref{app-eq:f-boundary} if we define
\begin{equation}\label{app-eq:k-convex}
\Gamma=\Gamma_k:=\{\lambda\in\mathbb{R}^n~|~\sigma_j(\lambda)>0,\ 1\leq j\leq k\}
\end{equation}
(admissible functions are called \emph{uniformly $k$-convex} in this case); indeed, for this example the validity of \eqref{app-eq:f-increase} as well as that of \eqref{app-eq:f-boundary} is well-known (see for instance \cite{Caffarelli1985,Trudinger1995}), and \eqref{app-eq:f-nu} holds obviously via the homogeneity.

We first recall the definition of the viscosity solution to equation \eqref{app-eq:F-x-D2u} following \cite{Caffarelli-Cabre-1995,Ishii1992,Urbas1990}. For simplicity, let $\mathrm{USC}(\Omega)$ and $\mathrm{LSC}(\Omega)$ respectively denote the set of upper and lower semicontinuous real valued functions on $\Omega$; let $B(\Omega)$ be the set of bounded functions on $\Omega$, and also let $B_{p}(\Omega)$ be the set of functions that are bounded in the intersection of $\Omega$ and any ball of $\mathbb{R}^n$.

\begin{definition}\label{app-def:visc}
A function $u\in\mathrm{USC}(\Omega)$ $(\mathrm{LSC}(\Omega))$ is said to be a viscosity subsolution (supersolution) of \eqref{app-eq:F-x-D2u} (or say that $u$ satisfies $F(x,D^2u)\geq(\leq)0$ in the viscosity sense), if for any open subset $A$ of $\Omega$, any admissible function $\psi\in C^2(A)$, and any local maximum (minimum) $x_0\in A$ of $u-\psi$ we have $$F(x_0,D^2\psi(x_0))\geq(\leq)0.$$
A function $u\in C^0(\Omega)$ is said to be a viscosity solution of \eqref{app-eq:F-x-D2u} if it is both a viscosity subsolution and a viscosity supersolution of \eqref{app-eq:F-x-D2u}.
\end{definition}

\begin{remark}\label{app-rk:connection}
 An admissible solution of \eqref{app-eq:F-x-D2u} is clearly a viscosity solution. Conversely, as argued in \cite[Proposition 2.2]{Urbas1990} for $k$-Hessian equations, under assumption \eqref{app-eq:f-boundary} one would see that a viscosity subsolution of \eqref{app-eq:F-x-D2u} is admissible at each point at which it is twice differentiable. 
\end{remark}
By adapting the ideas of Trudinger in \cite{Trudinger1990} for the prescribed curvature equations, we derive the following comparison principle for equation \eqref{app-eq:F-x-D2u} in bounded domains. 

\begin{theorem}\label{thm:comparison-b}
Let $\Omega$ be a bounded domain in $\mathbb{R}^n$. Assume that $u\in\mathrm{USC}(\bar{\Omega})\cap B(\Omega)$ and $v\in\mathrm{LSC}(\bar{\Omega})\cap B(\Omega)$ are respectively viscosity subsolution and viscosity supersolution of equation \eqref{app-eq:F-x-D2u} with conditions \eqref{app-eq:f-increase}--\eqref{app-eq:f-boundary} holding. Then $$\sup_{\Omega}(u-v)=\sup_{\partial\Omega}(u-v).$$
\end{theorem}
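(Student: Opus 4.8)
The plan is to establish Theorem~\ref{thm:comparison-b} by a doubling-of-variables argument in the spirit of the classical theory of viscosity solutions \cite{Ishii1992}, but adapted to the fact that the operator $F(x,D^2u)=f(\lambda(D^2u))-g(x)$ is only degenerate elliptic on the admissible cone and blows up (or rather $f$ stays bounded while ellipticity degenerates) at $\partial\Gamma$. Suppose for contradiction that $m:=\sup_\Omega(u-v)>\sup_{\partial\Omega}(u-v)$; since $u-v\in\mathrm{USC}(\bar\Omega)$ and $\bar\Omega$ is compact, the sup is attained at some interior point. First I would perturb to gain strictness: because condition \eqref{app-eq:f-nu} says $\sum_i\lambda_i\partial f/\partial\lambda_i\geq\nu(f)>0$, the dilation $u_\theta(x):=(1-\theta)u(x)$ (or a similar scaling adapted so that $D^2u_\theta=(1-\theta)D^2u$ still has eigenvalues in $\Gamma$ when those of $D^2u$ do, using that $\Gamma$ is a cone) satisfies, in the viscosity sense, $f(\lambda(D^2u_\theta))\geq f((1-\theta)\lambda)\geq f(\lambda)-\theta\,\nu(\inf_\Omega g)+o(\theta)\geq g(x)$ strictly increased — more precisely $u_\theta$ is a viscosity subsolution of $f(\lambda(D^2u_\theta))=g(x)+\delta_\theta$ for some $\delta_\theta>0$, and $\sup(u_\theta-v)$ still exceeds $\sup_{\partial\Omega}(u_\theta-v)$ for $\theta$ small since $m>0$. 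So it suffices to derive a contradiction assuming $u$ is a strict subsolution, i.e. $f(\lambda(D^2u))\geq g(x)+\delta$ in the viscosity sense.

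Next I would run the standard doubling: for $\varepsilon>0$ consider $\Phi_\varepsilon(x,y)=u(x)-v(y)-\frac{1}{2\varepsilon}|x-y|^2$ on $\bar\Omega\times\bar\Omega$, let $(x_\varepsilon,y_\varepsilon)$ be a maximum point, and recall the classical facts that $|x_\varepsilon-y_\varepsilon|^2/\varepsilon\to0$, that $x_\varepsilon,y_\varepsilon$ converge (along a subsequence) to an interior maximum point of $u-v$, hence lie in a fixed compact interior subset for $\varepsilon$ small, and that by the Theorem on Sums (Ishii's lemma) there exist symmetric matrices $X,Y$ with $X\leq Y$ such that $(p_\varepsilon,X)\in\overline J^{2,+}u(x_\varepsilon)$ and $(p_\varepsilon,Y)\in\overline J^{2,-}v(y_\varepsilon)$, where $p_\varepsilon=(x_\varepsilon-y_\varepsilon)/\varepsilon$. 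From the supersolution property of $v$ we get $f(\lambda(Y))\leq g(y_\varepsilon)$, in particular (here is where \eqref{app-eq:f-boundary} enters, cf.\ Remark~\ref{app-rk:connection}) $\lambda(Y)$ must lie in $\bar\Gamma$, and the strict-subsolution property of $u$ gives $f(\lambda(X))\geq g(x_\varepsilon)+\delta$, which forces $\lambda(X)\in\Gamma$. Since $X\leq Y$ and eigenvalues are monotone, $\lambda(X)\leq\lambda(Y)$ componentwise after ordering; combined with $\lambda(X)\in\Gamma$, $\Gamma$ open convex cone containing $\Gamma^+$, one concludes $\lambda(Y)\in\Gamma$ as well and, by the monotonicity \eqref{app-eq:f-increase} of $f$, $f(\lambda(Y))\geq f(\lambda(X))\geq g(x_\varepsilon)+\delta$. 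Letting $\varepsilon\to0$ so that $g(x_\varepsilon),g(y_\varepsilon)\to g(x_0)$ by continuity, this contradicts $f(\lambda(Y))\leq g(y_\varepsilon)$, proving the theorem.

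The main obstacle I anticipate is precisely the interaction between the cone constraint and the matrix inequality $X\leq Y$: one needs $\lambda(X)\in\Gamma$ (available from the strict subsolution side) to \emph{propagate} to $\lambda(Y)\in\Gamma$ so that $f(\lambda(Y))$ is even defined and the monotonicity $f(\lambda(X))\le f(\lambda(Y))$ can be invoked. The clean statement is: if $\lambda(X)\in\Gamma$ and $X\le Y$ then $\lambda(Y)\in\Gamma$ with $\lambda_i(X)\le\lambda_i(Y)$ for all $i$ — this uses $\Gamma\supseteq\Gamma^+$, convexity, and the Courant–Fischer characterization, and is the point where the structure of admissible cones is essential; I would isolate it as a small lemma. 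A secondary technical point is justifying the perturbation step rigorously in the viscosity framework (that $u_\theta$ is a genuine viscosity strict subsolution), which requires care because $f$ need not be homogeneous, so one must use \eqref{app-eq:f-nu} together with the $C^1$ regularity of $f$ and the mean value theorem along the segment $t\mapsto f(t\lambda)$; this is routine but must be written out. Everything else — the compactness extraction, the vanishing of the penalization term, Ishii's lemma — is standard and can be cited from \cite{Ishii1992}.
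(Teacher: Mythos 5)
Your route (doubling of variables plus the Theorem on Sums) is genuinely different from the paper's proof, which never touches semijets: the paper regularizes by sup/inf convolution, applies Jensen's lemma on the upper contact set of $u^+_\epsilon-v^-_\epsilon$, and tests at points of twice differentiability (Lemma \ref{lem:comparison-b}), after the same kind of dilation step. Your approach can be made to work, but as written it has two genuine problems. First, the strictness perturbation goes in the wrong direction. Condition \eqref{app-eq:f-nu} makes $t\mapsto f(t\lambda)$ increasing along rays in $\Gamma$, so replacing $u$ by $(1-\theta)u$ shrinks the Hessian and \emph{decreases} $f$; $(1-\theta)u$ need not be a subsolution at all, and your chain $f((1-\theta)\lambda)\ge f(\lambda)-\theta\,\nu(\inf_\Omega g)+o(\theta)$ only bounds $f((1-\theta)\lambda)$ below by something \emph{smaller} than $g$, so it does not yield a strict subsolution. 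The correct dilation is the opposite one, $u_t=u/t$ with $t<1$ (as in the paper): if $\phi$ is admissible and $u_t-\phi$ has a local maximum, then $t\phi$ is an admissible test function for $u$, and the mean value theorem along the ray together with \eqref{app-eq:f-nu} gives $f(\lambda(D^2\phi))\ge g+(1-t)\nu(\inf_\Omega g)$. This is an easy fix, but the step fails as stated.

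Second, and more seriously, the crux of your argument misuses Definition \ref{app-def:visc}, which tests only with \emph{admissible} functions. You assert that the supersolution property of $v$ yields $f(\lambda(Y))\le g(y_\varepsilon)$ and that \eqref{app-eq:f-boundary} then forces $\lambda(Y)\in\bar\Gamma$. Under the paper's definition this is not available: $f(\lambda(Y))$ is not even defined unless $\lambda(Y)\in\Gamma$; a jet of a supersolution whose Hessian part is not admissible carries no information (no admissible test function touches from below with that Hessian); and \eqref{app-eq:f-boundary} is a constraint that bites on the \emph{sub}solution side only (it says $f<\inf_\Omega g$ near $\partial\Gamma$, which conflicts with $f\ge g$, not with $f\le g$). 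The deductions must be reordered: first show that every $(p,X)\in\overline J^{2,+}u(x_\varepsilon)$ satisfies $\lambda(X)\in\Gamma$ — from the admissible-test definition one gets $f(\lambda(X+CI))\ge g(x_\varepsilon)$ for every $C>0$ with $\lambda(X+CI)\in\Gamma$, and \eqref{app-eq:f-boundary} then excludes $\lambda(X)\notin\Gamma$; this is the jet-level analogue of Remark \ref{app-rk:connection}, which as stated covers only points of twice differentiability, so it must be proved rather than cited. Then your propagation lemma with $X\le Y$ gives $\lambda(Y)\in\Gamma$ and $f(\lambda(Y))\ge f(\lambda(X))\ge g(x_\varepsilon)+\delta$, and only at this stage may you test $v$ at $y_\varepsilon$ with the quadratic whose Hessian is $Y-\tau I$ (admissible for small $\tau$) to obtain $f(\lambda(Y))\le g(y_\varepsilon)$ and reach the contradiction as $\varepsilon\to0$. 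With these two corrections your argument goes through and is a legitimate alternative to the paper's regularization proof.
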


To prove Theorem \ref{thm:comparison-b}, we establish below a preliminary result via regularizations.
\begin{lemma}\label{lem:comparison-b}
Let $\Omega$ be a bounded domain in $\mathbb{R}^n$ and let \eqref{app-eq:f-increase}--\eqref{app-eq:f-boundary} hold. Assume that $u\in\mathrm{USC}(\bar{\Omega})\cap B(\Omega)$ and $v\in\mathrm{LSC}(\bar{\Omega})\cap B(\Omega)$ satisfy $$F(x,D^2u)\geq \delta,\quad F(x,D^2v)\leq 0$$
in $\Omega$ in the viscosity sense for some constant $\delta>0$. Then $$\sup_{\Omega}(u-v)=\sup_{\partial\Omega}(u-v).$$
\end{lemma}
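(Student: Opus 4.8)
The plan is to argue by contradiction, combining the sup-convolution of $u$ and the inf-convolution of $v$ with Jensen's lemma and Alexandrov's theorem, and using the strict gap $\delta>0$ as the slack that absorbs the errors created by the regularization. Suppose, contrary to the claim, that $\sup_\Omega(u-v)>\sup_{\partial\Omega}(u-v)=:m'$. As $u-v$ is upper semicontinuous on the compact set $\bar\Omega$, its maximum $m$ is attained, and $K:=\{x\in\bar\Omega:(u-v)(x)=m\}$ is a nonempty compact subset of $\Omega$, because $u-v\le m'<m$ on $\partial\Omega$. Fix an open set $\Omega'$ with $K\subset\Omega'$ and $\overline{\Omega'}\subset\Omega$; all the analysis will take place on $\Omega'$, on whose closure $g$ is uniformly continuous.

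For small $\tau>0$, set $u^\tau(x)=\sup_{y\in\bar\Omega}\{u(y)-\frac1{2\tau}|x-y|^2\}$ and $v_\tau(x)=\inf_{y\in\bar\Omega}\{v(y)+\frac1{2\tau}|x-y|^2\}$. Then $u^\tau$ is semiconvex, $v_\tau$ is semiconcave, $u^\tau\downarrow u$ and $v_\tau\uparrow v$ pointwise, and the maximizing and minimizing points lie within $C\sqrt\tau$ of $x$, with $C$ depending only on $\operatorname{osc}u$ and $\operatorname{osc}v$. The standard translation argument then shows that, for $\tau$ small, $u^\tau$ is a viscosity subsolution of $F(x,D^2u^\tau)\ge\delta-\omega(\tau)$ and $v_\tau$ a viscosity supersolution of $F(x,D^2v_\tau)\le\omega(\tau)$ on $\overline{\Omega'}$, where $\omega(\tau)\to0$ is governed by the modulus of continuity of $g$. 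Moreover $\phi_\tau:=u^\tau-v_\tau\downarrow u-v$, so a short semicontinuity argument shows that for $\tau$ small $\phi_\tau$ attains its maximum over $\overline{\Omega'}$ at an interior point $x_\tau$ with $\operatorname{dist}(x_\tau,K)\to0$. Since $\phi_\tau$ is semiconvex, Jensen's lemma together with Alexandrov's theorem furnishes a point $x_*$ arbitrarily close to $x_\tau$ at which $u^\tau$ and $v_\tau$ are both twice differentiable and $D^2u^\tau(x_*)\le D^2v_\tau(x_*)$; here the gradients are irrelevant because $F$ has no first-order dependence.

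Now I would evaluate the two equations at $x_*$. At a point of twice differentiability, $(Du^\tau(x_*),D^2u^\tau(x_*)+\eta I)$ lies in the second-order superjet of $u^\tau$ for every $\eta>0$; testing $u^\tau$ against the corresponding paraboloid, letting $\eta\to0^+$, and invoking \eqref{app-eq:f-boundary} together with the convex-cone structure of $\Gamma$ exactly as in Remark \ref{app-rk:connection}, one gets $\lambda(D^2u^\tau(x_*))\in\Gamma$ and $f(\lambda(D^2u^\tau(x_*)))\ge g(x_*)+\delta-\omega(\tau)$. Since $\Gamma$ is stable under adding nonnegative matrices and $D^2v_\tau(x_*)\ge D^2u^\tau(x_*)$, also $\lambda(D^2v_\tau(x_*))\in\Gamma$, so testing $v_\tau$ from below with the paraboloid of Hessian $D^2v_\tau(x_*)-\eta I$ (admissible for small $\eta>0$) and letting $\eta\to0$ gives $f(\lambda(D^2v_\tau(x_*)))\le g(x_*)+\omega(\tau)$. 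Finally, \eqref{app-eq:f-increase} makes $M\mapsto f(\lambda(M))$ nondecreasing in the matrix order on admissible matrices, so $D^2u^\tau(x_*)\le D^2v_\tau(x_*)$ forces $f(\lambda(D^2u^\tau(x_*)))\le f(\lambda(D^2v_\tau(x_*)))$. Chaining the three inequalities yields $\delta\le2\omega(\tau)$, and letting $\tau\to0$ gives the contradiction; hence $\sup_\Omega(u-v)=\sup_{\partial\Omega}(u-v)$.

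The hard part is not any individual estimate but the interaction between the regularization and the restricted class of test functions in Definition \ref{app-def:visc}: since the equations carry information only against \emph{admissible} test functions, one has to certify --- using \eqref{app-eq:f-boundary} and the convex-cone structure of $\Gamma$, which is exactly the content of Remark \ref{app-rk:connection} --- that the Hessians $D^2u^\tau(x_*)$ and $D^2v_\tau(x_*)$ delivered by Jensen's lemma actually lie in $\Gamma$, so that the sub- and supersolution inequalities may legitimately be used at $x_*$. Once this admissibility is secured, the strictness $\delta>0$ closes the argument. I note that neither \eqref{app-eq:f-nu} nor any boundedness beyond $B(\Omega)$ is needed for this bounded-domain statement; those hypotheses will be used only in the unbounded-domain versions.
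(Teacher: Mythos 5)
Your proposal is correct and follows essentially the same route as the paper's proof: sup-/inf-convolution regularization, Jensen's lemma plus Alexandrov's theorem to produce a twice-differentiable contact point with $D^2u^\tau\le D^2v_\tau$, admissibility of the Hessians secured via \eqref{app-eq:f-boundary} and the cone structure as in Remark \ref{app-rk:connection}, monotonicity from \eqref{app-eq:f-increase}, and the strict slack $\delta>0$ absorbing the $o(1)$ errors. The only differences are presentational (you argue by contradiction and localize near the maximum set, folding the $x$-shift error into a modulus $\omega(\tau)$, whereas the paper proves the boundary identity for the regularized functions on $\Omega_\epsilon$ and then passes to the limit), so no further comment is needed.
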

\begin{proof}
As in \cite{Jensen-Lions-1988,Urbas1990}, for $\epsilon>0$ we define the approximations of $u$ and $v$ as
\begin{gather}
u_\epsilon^+(x)=\sup_{y\in\Omega}\left\{u(y)-\omega_0\frac{|x-y|^2}{\epsilon^2}\right\},\label{app-eq:approxi-u}\\
v_\epsilon^-(x)=\inf_{y\in\Omega}\left\{v(y)+\omega_0\frac{|x-y|^2}{\epsilon^2}\right\},\label{app-eq:approxi-v}
\end{gather}
where $\omega_0=\max\{\text{osc}_{\Omega}u,\text{osc}_{\Omega}v\}$. The supremum in \eqref{app-eq:approxi-u} and the infimum in \eqref{app-eq:approxi-v} are respectively attained at points $x^{\pm}\in\Omega$ satisfying $|x-x^{\pm}|\leq \epsilon$, provided $x\in\Omega_\epsilon:=\{x\in\Omega:\text{dist}(x,\partial\Omega)>\epsilon\}$. Clearly, $u_\epsilon^+,v_\epsilon^-\in C^{0,1}(\overline{\Omega_\epsilon})$ and 
\begin{equation}\label{app-eq:approxi-u-v}
\sup_{\Omega_\epsilon}|u_\epsilon^+-u|,\ \sup_{\Omega_\epsilon}|v_\epsilon^--v|\to 0\quad\text{as }\epsilon\to0.
\end{equation}
Moreover, the functions $u_\epsilon^+(x)$, $v_\epsilon^-(x)$ are respectively semi-convex and semi-concave in $\Omega$, with
\begin{equation*}
D^2u_\epsilon^+, -D^2v_\epsilon^-\geq-\frac{2\omega_0}{\epsilon^2}
\end{equation*} 
in the sense of distributions, and they satisfy 
\begin{equation*}
F(x^+, D^2u_\epsilon^+)\geq\delta, \quad F(x^-, D^2v_\epsilon^-)\leq 0, 
\end{equation*}
in $\Omega_\epsilon$ in the viscosity sense.

Considering now the semi-convex, Lipschitz continuous function $w_\epsilon=u_\epsilon^+-v_\epsilon^-$, we claim for small $\epsilon$,
\begin{equation}\label{app-eq:w-O-ep}
\sup_{\Omega_\epsilon}w_\epsilon=\sup_{\partial\Omega_\epsilon}w_\epsilon.
\end{equation}
 If not, $w_\epsilon$ has an interior maximum in $\Omega_\epsilon$. By Lemma 3.10 in \cite{Jensen1988}, the upper contact set $K^+$ of $w_\epsilon$ is nonempty in $\Omega_\epsilon$. Thus, for almost all $x\in K^+$, $D^2u_\epsilon^+\leq D^2v_\epsilon^-$. From Remark \ref{app-rk:connection}, we see $\lambda(D^2u_\epsilon^+),\lambda(D^2v_\epsilon^-)\in\Gamma$. Via \eqref{app-eq:f-increase}, we obtain for almost all $x\in K^+$,
\begin{align}
F(x^-,D^2v_\epsilon^-(x))\geq F(x^-,D^2u_\epsilon^+(x))&=F(x^+,D^2u_\epsilon^+(x))+o(1)\notag\\
&\geq\delta+o(1),\label{app-eq:F-ep}
\end{align}
where $o(1)\to 0$ as $\epsilon\to 0$. Given such $x_0\in K^+$, where $v_\epsilon^-$ is twice differentiable, we set for $\tau>0$
\begin{align*}
\phi_\tau(x)=&v_\epsilon^-(x_0)+Dv_\epsilon^-(x_0)(x-x_0)+\frac12(x-x_0)^TD^2v_\epsilon^-(x_0)(x-x_0)\\
&-\frac{\tau}{2}|x-x_0|^2.
\end{align*}
Since $D^2\phi_\tau=D^2v_\epsilon^-(x_0)-\tau I$, $\lambda(D^2\phi_\tau)\in\Gamma$ if $\tau$ is sufficiently small. Also, $v_\epsilon^--\phi_\tau$ has a local minimum at $x_0$. By Definition \ref{app-def:visc},
\begin{equation}\label{app-eq:F-tau}
0\geq F(x_0^-,D^2v_\epsilon^-(x_0)-\tau I)=F(x_0^-,D^2v_\epsilon^-(x_0))+o(1),
\end{equation}
where $o(1)\to0$ as $\tau\to0$. Clearly, \eqref{app-eq:F-tau} contradicts \eqref{app-eq:F-ep} when $\epsilon,\tau$ are small enough.

With facts \eqref{app-eq:approxi-u-v} and \eqref{app-eq:w-O-ep} in hand, we conclude immediately the assertion of Lemma \ref{lem:comparison-b} by letting $\epsilon\to0$.
\end{proof}

\begin{proof}[Proof of Theorem \ref{thm:comparison-b}]
Set $u_t(x)=\frac{1}{t}u(x)$, for $0<t<1$. We first show that $u_t$ satisfies $$F(x,D^2u_t)\geq (1-t)\tilde{\nu}$$
in $\Omega$ in the viscosity sense, where $\tilde{\nu}:=\nu\left(\inf_{\Omega}g\right)$.
Let $\phi$ be an admissible function with a local maximum of $u_t-\phi$ at $x_0$. Since $u$ is a viscosity subsolution of \eqref{app-eq:F-x-D2u}, $$F(x_0,tD^2\phi(x_0))\geq 0.$$
By virtue of \eqref{app-eq:f-nu}, for some $t_0\in(t,1)$,
\begin{align*}
&F(x_0,D^2\phi(x_0))\\
&=F(x_0,tD^2\phi(x_0))+(1-t)\sum_{i=1}^n\lambda_i(D^2\phi(x_0))\frac{\partial f}{\partial\lambda_i}(t_0\lambda(D^2\phi(x_0)))\\
&\geq (1-t)\nu(f(t\lambda(D^2\phi(x_0))))\geq (1-t)\nu(\inf_{\Omega} g).
\end{align*}
 Applying Lemma \ref{lem:comparison-b}, we have 
\begin{equation*}
\sup_{\Omega}(u_t-v)=\sup_{\partial\Omega}(u_t-v).
\end{equation*}
Consequently, the proof finishes by letting $t\to 1$.
\end{proof}

\begin{remark}
Theorem \ref{thm:comparison-b} may hold for equation \eqref{app-eq:F-x-D2u} without requiring assumption \eqref{app-eq:f-nu}. Indeed, if we let $g\equiv C$ for some constant $C>0$ (or modify assumption \eqref{app-eq:f-increase} as: there exists $\lambda_0>0$ such that 
\begin{equation}\label{app-eq:f-degenerate}
f(\lambda(D^2u+\eta))-f(\lambda(D^2u))\geq \lambda_0(\det\eta)^{\frac{1}{n}}
\end{equation} 
for any admissible function $u$ and symmetric matrix $\eta\geq 0$), then Theorem \ref{thm:comparison-b} could be proved under \eqref{app-eq:f-increase} (\eqref{app-eq:f-degenerate}) and \eqref{app-eq:f-boundary} by applying the Aleksandrov maximum principle. For a view of such arguments,  we refer to \cite{Trudinger1988,Urbas1990}, where uniformly elliptic equations and $k$-Hessian equations ($f=(\sigma_k)^{1/k}$, clearly satisfying \eqref{app-eq:f-degenerate}) were considered.
\end{remark}

Based on Theorem \ref{thm:comparison-b}, one can immediately prove a comparison principle for equation \eqref{app-eq:F-x-D2u} in unbounded domains, provided that the given subsolution and supersolution coincide at infinity. 
\begin{corollary}\label{thm:comparison-ub}
Let $\Omega$ be an unbounded domain in $\mathbb{R}^n$. Assume that $u\in\mathrm{USC}(\bar{\Omega})\cap B_{p}(\Omega)$ and $v\in\mathrm{LSC}(\bar{\Omega})\cap B_{p}(\Omega)$ are respectively viscosity subsolution and viscosity supersolution of equation \eqref{app-eq:F-x-D2u} with conditions \eqref{app-eq:f-increase}--\eqref{app-eq:f-boundary} holding. If $u\leq v$ on $\partial\Omega$ and $$\lim_{|x|\to\infty}(u-v)(x)=0,$$ then $u\leq v$ in $\Omega$.
\end{corollary}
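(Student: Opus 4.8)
The plan is to argue by contradiction and reduce everything to the bounded-domain comparison principle, Theorem~\ref{thm:comparison-b}, by exhausting $\Omega$ with the bounded open sets $\Omega_R:=\Omega\cap B_R(0)$.

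First I would suppose, for contradiction, that $\theta:=\sup_{\Omega}(u-v)>0$. Since $u-v$ is upper semicontinuous on $\bar\Omega$ and $(u-v)(x)\to 0$ as $|x|\to\infty$, there is $R_0>0$ with $u-v<\theta/2$ on $\{x\in\Omega:|x|\ge R_0\}$; enlarging $R_0$ if needed, I also fix a point $x_0\in\Omega\cap B_{R_0}(0)$ with $(u-v)(x_0)>\theta/2$. Next, for any fixed $R>R_0$, I observe that $u|_{\Omega_R}$ and $v|_{\Omega_R}$ again satisfy the hypotheses of Theorem~\ref{thm:comparison-b} on the bounded open set $\Omega_R$: the structure conditions \eqref{app-eq:f-increase}--\eqref{app-eq:f-boundary} on $f$ are unchanged; $u,v\in B_{p}(\Omega)$ forces $u,v\in B(\Omega_R)$; $\overline{\Omega_R}\subset\bar\Omega$ gives $u\in\mathrm{USC}(\overline{\Omega_R})$ and $v\in\mathrm{LSC}(\overline{\Omega_R})$; and being a viscosity sub-/supersolution is a local notion, so $F(x,D^2u)\ge 0\ge F(x,D^2v)$ persists in $\Omega_R$ in the viscosity sense. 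Applying Theorem~\ref{thm:comparison-b} (to each connected component of $\Omega_R$, or directly, since connectedness is irrelevant to its proof) yields
\[
\sup_{\Omega_R}(u-v)=\sup_{\partial\Omega_R}(u-v).
\]
Finally I decompose $\partial\Omega_R\subset(\partial\Omega\cap\overline{B_R})\cup(\bar\Omega\cap\partial B_R)$: on the first piece $u\le v$ by hypothesis; on the second piece $|x|=R>R_0$ (and any point of that piece that also lies on $\partial\Omega$ is covered by the first case), so in all cases $u-v\le\theta/2$ on $\partial\Omega_R$. Hence $\sup_{\Omega_R}(u-v)\le\theta/2$, which contradicts $\sup_{\Omega_R}(u-v)\ge(u-v)(x_0)>\theta/2$ since $x_0\in\Omega_R$. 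Therefore $\theta\le 0$, i.e. $u\le v$ in $\Omega$.

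I do not expect a genuine obstacle in this corollary; the only mild points are the selection of $R_0$ from the decay at infinity together with the upper semicontinuity of $u-v$, and the bookkeeping for $\partial\Omega_R$ when $\Omega_R$ happens to be disconnected — both harmless, since the regularization argument underlying Theorem~\ref{thm:comparison-b} is purely local and delivers the ``maximum on the boundary'' identity on an arbitrary bounded open set.
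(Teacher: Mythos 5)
Your proof is correct and follows essentially the same route as the paper: both localize to $\Omega\cap B_R$, control the two pieces of its boundary using the hypothesis $u\le v$ on $\partial\Omega$ together with the decay of $u-v$ at infinity, and then invoke Theorem~\ref{thm:comparison-b}. The paper phrases this as a direct $\epsilon$-argument (comparing $u$ with $v+\epsilon$ and letting $\epsilon\to 0$) rather than your contradiction with $\theta/2$, but the content is the same; note only that the finiteness of $\theta=\sup_\Omega(u-v)$, which your use of $\theta/2$ tacitly requires, follows from $u,v\in B_p(\Omega)$ and the limit condition.
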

\begin{proof}
For $0<\epsilon\ll 1$, there is $R>0$ such that $u(x)\leq v(x)+\epsilon$ when $|x|>R$. Given arbitrary  $x_0\in\Omega$, choose a large ball $B_r$ with the radius $r>R$ and the center at the origin and  containing $x_0$. Note that $u\leq v+\epsilon$ on $\partial\Omega\cup\partial B_r$. Thus, applying Theorem \ref{thm:comparison-b} to $u$ and $v+\epsilon$ on the domain $\Omega\cap B_r$, we obtain $u(x_0)\leq v(x_0)+\epsilon$. Then letting $\epsilon\to0$ yields $u(x_0)\leq v(x_0)$. The proof is done.
\end{proof}

\section{Perron's method}\label{sec:perron}
With comparison principles established in Theorem \ref{thm:comparison-b} and Corollary \ref{thm:comparison-ub}, Perron's method as in \cite{Ishii1992,Ishii1989,Ishii-Lions-1990} could be immediately adapted to the Dirichlet problem for equation \eqref{app-eq:F-x-D2u} to show the existence of its viscosity solutions. Precisely, we have the following theorem.

\begin{theorem}\label{thm:Perron-m}
Let $\Omega$ be a domain in $\mathbb{R}^n$ and $\varphi\in C^0(\partial\Omega)$. Let $F(x,D^2u)$ be given in \eqref{app-eq:F-x-D2u} with \eqref{app-eq:f-increase}--\eqref{app-eq:f-boundary} holding. Suppose that there exist $\underline{u},\bar{u}\in C^0(\bar{\Omega})$ such that
$$F(x,D^2\underline{u})\geq 0\geq F(x,D^2\bar{u})$$
in $\Omega$ in the viscosity sense, $\underline{u}\leq\bar{u}$ in $\Omega$ and $\underline{u}=\varphi$ on $\partial \Omega$ (and additionally 
$$\lim_{|x|\to\infty}(\underline{u}-\bar{u})(x)=0,$$
provided $\Omega$ is unbounded). Then
\begin{align*}
u(x):=\sup\{&v(x)| v\in\mathrm{USC}(\Omega), F(x,D^2v)\geq 0\text{ in }\Omega\text{ in the viscosity}\\
&\text{sense, with }\underline{u}\leq v\leq\bar{u}\text{ in }\Omega \text{ and }v=\varphi\text{ on }\partial\Omega\}
\end{align*} 
is in $C^0(\bar{\Omega})$ and is a viscosity solution of the problem 
\begin{equation}\label{app-eq:D-F-varphi}
\left\{
\begin{array}{ll}
F(x,D^2u)=0 & \text{in }\Omega,\\
u=\varphi & \text{on }\partial\Omega.
\end{array}
\right.
\end{equation}
\end{theorem}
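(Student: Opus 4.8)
The plan is to adapt the classical Perron method of \cite{Ishii1992,Ishii1989,Ishii-Lions-1990}, which becomes available here once the comparison principles of Appendix~\ref{sec:comparison} are in hand; I would carry it out in four steps. First I would set up the Perron family: let $\mathcal F$ be the set of all $v\in\mathrm{USC}(\bar\Omega)$ that are viscosity subsolutions of \eqref{app-eq:F-x-D2u} in $\Omega$ with $\underline u\le v\le\bar u$ in $\Omega$ and $v=\varphi$ on $\partial\Omega$, so that $u=\sup_{v\in\mathcal F}v$. Then $\mathcal F\ne\varnothing$ since $\underline u\in\mathcal F$, and $\mathcal F$ is stable under taking the maximum of finitely many of its members, because the ellipticity \eqref{app-eq:f-increase} makes the pointwise maximum of two viscosity subsolutions a viscosity subsolution (at a local maximum point of $\max(v_1,v_2)-\psi$, the same point is a local maximum of $v_i-\psi$ for an index $i$ realizing the maximum). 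Hence $u$ is well defined, $\underline u\le u\le\bar u$ in $\Omega$, and $u=\varphi$ on $\partial\Omega$ since every competitor is.

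Next I would show that the upper semicontinuous envelope $u^{*}$ is a viscosity subsolution, using the standard stability fact that the USC envelope of a supremum of viscosity subsolutions of \eqref{app-eq:F-x-D2u} is again one. The argument is routine: given an admissible $\psi\in C^{2}$ for which $u^{*}-\psi$ has a strict local maximum at $x_0$, choose $v_j\in\mathcal F$ and points $y_j\to x_0$ with $v_j(y_j)\to u^{*}(x_0)$; then $v_j-\psi$ attains a local maximum at some $z_j$, which must converge to $x_0$, so $F(z_j,D^{2}\psi(z_j))\ge0$, and letting $j\to\infty$ --- using continuity of $g$, of $\lambda(D^{2}\psi)$, and $\lambda(D^{2}\psi(x_0))\in\Gamma$ --- yields $F(x_0,D^{2}\psi(x_0))\ge0$. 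Also $\underline u\le u^{*}\le\bar u$ in $\Omega$, the upper bound because $\bar u$ is continuous and $\ge u$.

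Then I would prove that the lower semicontinuous envelope $u_{*}$ is a viscosity supersolution, arguing by contradiction with a localized bump. If not, there is an admissible $\psi\in C^{2}$, normalized so that $u_{*}-\psi$ has a local minimum equal to $0$ at some $x_0\in\Omega$, with $F(x_0,D^{2}\psi(x_0))>0$; one may also assume $u_{*}(x_0)<\bar u(x_0)$, since otherwise $u_{*}$ inherits the supersolution inequality at $x_0$ from $\bar u$. For suitably small $\rho,\gamma,\delta>0$ with $\delta<\gamma\rho^{2}$, the function $b(x):=\psi(x)+\delta-\gamma|x-x_0|^{2}$ is admissible and a classical subsolution on $B_\rho(x_0)$ (its Hessian is a uniformly small perturbation of $D^{2}\psi$, so $F\ge0$ there by continuity and \eqref{app-eq:f-increase}), stays below $\bar u$ on $B_\rho(x_0)$ for $\rho,\delta$ small (using $u_{*}(x_0)<\bar u(x_0)$), and lies strictly below $u$ near $\partial B_\rho(x_0)$. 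Then $\hat v:=\max(u,b)$ on $B_\rho(x_0)$ and $\hat v:=u$ elsewhere is a viscosity subsolution with $\underline u\le\hat v\le\bar u$ and $\hat v=\varphi$ on $\partial\Omega$, i.e.\ $\hat v\in\mathcal F$; but $\hat v$ exceeds $u$ along a sequence tending to $x_0$ (by the definition of $u_{*}$), contradicting $u=\sup\mathcal F$. Here the hypothesis \eqref{app-eq:f-boundary} is what keeps these perturbations admissible, cf.\ Remark~\ref{app-rk:connection}.

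Finally, with $u^{*}$ a subsolution, $u_{*}$ a supersolution, and $u_{*}\le u\le u^{*}$ in $\Omega$, the remaining point is the boundary relation $u^{*}=u_{*}=\varphi$ on $\partial\Omega$. That $u=\varphi$ on $\partial\Omega$ and $u_{*}\ge\underline u=\varphi$ on $\partial\Omega$ is immediate, whereas $u^{*}\le\varphi$ on $\partial\Omega$ --- i.e.\ $\limsup_{x\to\xi}u(x)\le\varphi(\xi)$ for every $\xi\in\partial\Omega$ --- requires an upper barrier at each boundary point. Granting this, Theorem~\ref{thm:comparison-b} (when $\Omega$ is bounded) or Corollary~\ref{thm:comparison-ub} (when $\Omega$ is unbounded, using that $\underline u\le u\le\bar u$ together with $\lim_{|x|\to\infty}(\underline u-\bar u)(x)=0$ force $\lim_{|x|\to\infty}(u^{*}-u_{*})(x)=0$) yields $u^{*}\le u_{*}$ in $\Omega$; hence $u_{*}=u=u^{*}$, so $u\in C^{0}(\bar\Omega)$, and, being both a viscosity subsolution and a viscosity supersolution of \eqref{app-eq:F-x-D2u} with $u=\varphi$ on $\partial\Omega$, $u$ solves \eqref{app-eq:D-F-varphi}. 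I expect the main obstacle to be exactly this boundary estimate $\limsup_{x\to\xi}u(x)\le\varphi(\xi)$: it is not automatic from the bare hypotheses, and in the application in Section~\ref{sec:3} it is supplied by the strict convexity of $D$ and a barrier construction in the spirit of Lemma~\ref{P-lem:w-xi}; a lesser technical nuisance is ensuring, in the bump step, that the perturbation genuinely overtakes $u$ at a point even when $u$ fails to be lower semicontinuous at $x_0$.
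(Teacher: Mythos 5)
Your plan follows the same route as the paper: identify $u^{*}$ as a subsolution by the stability of suprema (the paper cites Lemma~\ref{lem:Perron-sub}, i.e.\ CIL Lemma~4.2), show $u_{*}$ is a supersolution by the bump construction (the paper cites Lemma~\ref{lem:Perron-super}, i.e.\ CIL Lemma~4.4), match the two envelopes at $\partial\Omega$ and at infinity, and close with the comparison principle (Theorem~\ref{thm:comparison-b} or Corollary~\ref{thm:comparison-ub}). Your re-derivations of the two lemmas are correct; in particular your observation that the contradiction hypothesis already forces $u_{*}(x_0)<\bar u(x_0)$ (otherwise $\psi$ touches $\bar u$ from below and $\bar u$ being a supersolution contradicts $F(x_0,D^{2}\psi(x_0))>0$) is exactly the reason the bump $b$ stays below $\bar u$ for small $\rho,\delta$.

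The one place where you diverge from the paper's presentation is the boundary estimate $\limsup_{x\to\xi}u(x)\le\varphi(\xi)$, which you flag as not automatic; you are right, and this is in fact the weak point of the paper's own proof. The paper simply asserts ``$u_*=u^*=u=\varphi$ on $\partial\Omega$'' as ``easily observed,'' but with $u^{*}(\xi)=\limsup_{y\to\xi,\,y\in\Omega}u(y)$ the only immediate bound is $u^{*}\le\bar u$ on $\partial\Omega$, and the hypotheses impose $\underline u=\varphi$ on $\partial\Omega$ but say nothing forcing $\bar u=\varphi$ there (indeed in the application of Section~\ref{sec:3} one has $\bar u>\varphi$ on $\partial D$). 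So the inequality $u^{*}\le\varphi$ on $\partial\Omega$ genuinely needs a separate barrier argument, just as you suspect; in the standard formulation of Perron's method (e.g.\ Theorem~4.1 of \cite{Ishii1992}) this is built into the hypotheses via the requirement $\underline u_{*}=\bar u^{*}=\varphi$ on $\partial\Omega$. One small correction to your closing remark: the barriers of Lemma~\ref{P-lem:w-xi} are \emph{lower} barriers (they lie below $\varphi$ on $\partial D$ and equal it at $\xi$), and they control $u$ from below; what is missing in the theorem as stated is an \emph{upper} barrier at each $\xi$, i.e.\ a local supersolution lying above $\varphi$ on $\partial D$ near $\xi$, above $\bar u$ on the inner sphere $\partial B_\rho(\xi)$, and equal to $\varphi(\xi)$ at $\xi$. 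In the setting of Section~\ref{sec:3} such barriers are available essentially for free --- for instance affine functions of sufficiently large slope in the outward normal direction, which are vacuously viscosity supersolutions since no admissible test function can touch them from below --- but this step is not carried out in the paper, and it is the one ingredient one would need to add to make either your argument or the paper's complete.
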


For a function $v:\Omega\to\mathbb{R}$, we define its upper semicontinuous envelope $v^*$ by $$v^*(x)=\limsup_{y\to x,y\in\Omega}v(y)$$ and lower semicontinuous envelope $v_*$ by $$v_*(x)=\liminf_{y\to x,y\in\Omega}v(y).$$ To prove Theorem \ref{thm:Perron-m}, we need the following key lemmas.

\begin{lemma}[Lemma 4.2 in \cite{Ishii1992}]\label{lem:Perron-sub}
Let $\Omega$ be a domain in $\mathbb{R}^n$ and let $\mathcal{F}$ be a family of viscosity subsolutions of equation \eqref{app-eq:F-x-D2u} in $\Omega$. Let $w(x)=\sup\{u(x):u\in\mathcal{F}\}$ and assume that $w^*(x)<\infty$ for $x\in\Omega$. Then $w^*$ is a viscosity subsolution of \eqref{app-eq:F-x-D2u} in $\Omega$.
\end{lemma}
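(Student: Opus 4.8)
The plan is to verify Definition~\ref{app-def:visc} directly for $w^*$. First note that $w^*\in\mathrm{USC}(\Omega)$ automatically, being an upper semicontinuous envelope, so it is a legitimate candidate; and we assume $\mathcal{F}\neq\emptyset$ (the case $\mathcal{F}=\emptyset$ being vacuous). Fix an open set $A\subseteq\Omega$, an admissible $\psi\in C^2(A)$, and a local maximum point $x_0\in A$ of $w^*-\psi$; the goal is $F(x_0,D^2\psi(x_0))\ge0$. The device is the classical one: approximate the (possibly non-attained) supremum from below by members of $\mathcal{F}$ evaluated at nearby points, then pass to the limit via a compactness argument. Note that assumption \eqref{app-eq:f-nu} and the function $\nu$ play no role here; only \eqref{app-eq:f-increase} (through admissibility) and the continuity of $f$ and $g$ are used.

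I first reduce to a \emph{strict} local maximum by replacing $\psi$ with $\tilde\psi(x):=\psi(x)+|x-x_0|^4$. This changes neither $\psi(x_0)$ nor $D^2\psi(x_0)$, makes $x_0$ a strict local maximum of $w^*-\tilde\psi$, and keeps $\tilde\psi$ admissible near $x_0$: since $\lambda(D^2\psi(x_0))\in\Gamma$ with $\Gamma$ open, while the Hessian of $|x-x_0|^4$ tends to $0$ as $x\to x_0$, one has $\lambda(D^2\tilde\psi(x))\in\Gamma$ for $x$ in some closed ball $\overline{B}:=\overline{B(x_0,r)}\subseteq A$ on which $x_0$ is also the strict maximum of $w^*-\tilde\psi$. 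By the definition of the upper envelope there is a sequence $x_j\to x_0$ in $\Omega$ with $w(x_j)\to w^*(x_0)$, and for each $j$ a function $u_j\in\mathcal{F}$ with $u_j(x_j)\ge w(x_j)-\tfrac{1}{j}$, hence $u_j(x_j)\to w^*(x_0)$. Since $u_j\in\mathrm{USC}(\Omega)$, the function $u_j-\tilde\psi$ attains its maximum over the compact set $\overline{B}$ at some $y_j\in\overline{B}$.

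The heart of the argument is to show $y_j\to x_0$ and $u_j(y_j)\to w^*(x_0)$. From $u_j(y_j)-\tilde\psi(y_j)\ge u_j(x_j)-\tilde\psi(x_j)\to w^*(x_0)-\tilde\psi(x_0)$ one obtains a lower bound on $\liminf_j\bigl(u_j(y_j)-\tilde\psi(y_j)\bigr)$. Passing to a subsequence with $y_j\to\bar y\in\overline{B}$ and using $u_j(y_j)\le w(y_j)$ together with $\limsup_j w(y_j)\le w^*(\bar y)$, we get $w^*(\bar y)-\tilde\psi(\bar y)\ge w^*(x_0)-\tilde\psi(x_0)$, so strictness forces $\bar y=x_0$; as this holds along every subsequence, $y_j\to x_0$, and the two inequalities then pinch $u_j(y_j)-\tilde\psi(y_j)\to w^*(x_0)-\tilde\psi(x_0)$. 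In particular, for $j$ large $y_j$ is an interior local maximum of $u_j-\tilde\psi$ with $\tilde\psi$ admissible near $y_j$, so Definition~\ref{app-def:visc} gives $F(y_j,D^2\tilde\psi(y_j))\ge0$. Letting $j\to\infty$: $\psi\in C^2$ gives $D^2\tilde\psi(y_j)\to D^2\tilde\psi(x_0)=D^2\psi(x_0)$, hence $\lambda(D^2\tilde\psi(y_j))\to\lambda(D^2\psi(x_0))\in\Gamma$, and since $F(x,M)=f(\lambda(M))-g(x)$ with $f\in C^1(\Gamma)$ and $g\in C^0(\Omega)$ is continuous there, we conclude $F(x_0,D^2\psi(x_0))=\lim_{j\to\infty}F(y_j,D^2\tilde\psi(y_j))\ge0$. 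This verifies Definition~\ref{app-def:visc}, so $w^*$ is a viscosity subsolution.

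The step I expect to demand the most care is the compactness argument identifying $\bar y=x_0$ and upgrading it to $y_j\to x_0$: it is precisely there that the strictness of the maximum (manufactured by the quartic perturbation), the bound $u_j\le w$, and the upper semicontinuity of both $w^*$ (through its definition as a $\limsup$) and the individual $u_j$ (through attainment of the maximum on a compact set) all enter at once. By contrast, the admissibility of the perturbed test function and the passage to the limit in $F$ are routine consequences of the openness of $\Gamma$ and the continuity of $f$ and $g$.
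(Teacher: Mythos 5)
Your proof is correct and is essentially the standard argument from Crandall--Ishii--Lions (Lemma 4.2 in \cite{Ishii1992}), which the paper cites rather than reproduces: approximate the supremum from below by members of $\mathcal{F}$, perturb the test function by $|x-x_0|^4$ to manufacture a strict maximum without altering $D^2\psi(x_0)$, extract interior maximum points $y_j\to x_0$ by the pinching/compactness argument, apply the subsolution property of each $u_j$ at $y_j$, and pass to the limit using continuity of $f$, $g$, and the eigenvalue map. You also correctly adapt the argument to the admissible-test-function setting by invoking openness of $\Gamma$ to keep $\tilde\psi$ admissible on a ball around $x_0$, and correctly observe that \eqref{app-eq:f-nu} and \eqref{app-eq:f-boundary} are not needed here.
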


\begin{lemma}[Lemma 4.4 in \cite{Ishii1992}]\label{lem:Perron-super}
Let $\Omega$ be a domain in $\mathbb{R}^n$ and $u$ be a viscosity subsolution of equation \eqref{app-eq:F-x-D2u} in $\Omega$. If $u_*$ fails to be a viscosity supersolution at some point $\hat{x}$, i.e. there exists admissible function $\psi$ for which $F(\hat{x},D^2\phi(\hat{x}))>0$, then for any small $\kappa>0$ there is a viscosity subsolution $U_{\kappa}$ of \eqref{app-eq:F-x-D2u} satisfying
\begin{equation}\label{app-eq:kappa-super}
\begin{cases}
U_{\kappa}(x)\geq u(x)\quad\text{and}\quad\sup_{\Omega}(U_{\kappa}-u)>0,\\
U_{\kappa}(x)=u(x)\quad\text{for }x\in\Omega, |x-\hat{x}|\geq\kappa.
\end{cases}
\end{equation}
\end{lemma}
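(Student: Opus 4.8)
\textbf{Proof proposal for Lemma \ref{lem:Perron-super}.} The plan is to adapt the classical ``bump'' construction from Perron's method (cf.\ \cite{Ishii1992,Ishii-Lions-1990}) to our setting. Since $u_*$ fails to be a viscosity supersolution at $\hat{x}$, by Definition \ref{app-def:visc} there is an open neighborhood, an admissible $\psi\in C^2$, and a local minimum $\hat{x}$ of $u_*-\psi$ at which $F(\hat{x},D^2\psi(\hat{x}))>0$. By continuity of $F$ in $x$ and of $\lambda(\cdot)$ in the matrix entries, together with the strict monotonicity \eqref{app-eq:f-increase}, I would first fix small $r,\theta,\gamma>0$ so that the shifted quadratic
\begin{equation*}
\psi_{\gamma,\theta}(x):=\psi(x)+\gamma-\theta|x-\hat{x}|^2
\end{equation*}
is still admissible and satisfies $F(x,D^2\psi_{\gamma,\theta}(x))\geq 0$ for all $x\in B_r(\hat{x})$; here one uses that $D^2\psi_{\gamma,\theta}=D^2\psi-2\theta I$ lies in $\Gamma$ for $\theta$ small and that $F$ only decreases slightly under this perturbation. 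Shrinking $\gamma$ relative to $\theta$ so that $\psi_{\gamma,\theta}<\psi\leq u_*\leq u$ on the annulus $\{r/2\leq |x-\hat{x}|\leq r\}$, while $\psi_{\gamma,\theta}(\hat{x})=\psi(\hat{x})+\gamma>\psi(\hat{x})$, I obtain a local classical (hence viscosity) subsolution that strictly exceeds $u$ near $\hat{x}$ but lies below $u$ on the outer part of the ball.

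Next, given $\kappa>0$, I would work inside $B_{\min(r,\kappa)}(\hat{x})$ and define
\begin{equation*}
U_{\kappa}(x):=\begin{cases}\max\{u(x),\psi_{\gamma,\theta}(x)\}, & x\in B_{r/2}(\hat{x}),\\[1mm] u(x), & x\in\Omega\setminus B_{r/2}(\hat{x}).\end{cases}
\end{equation*}
On the overlap region $\{r/2\leq |x-\hat{x}|< r\}$ the two formulas agree because $\psi_{\gamma,\theta}<u$ there, so $U_{\kappa}$ is well defined and upper semicontinuous on $\Omega$. By Lemma \ref{lem:Perron-sub} the local maximum of the two viscosity subsolutions $u$ and $\psi_{\gamma,\theta}$ is a viscosity subsolution on $B_{r/2}(\hat{x})$, and $U_{\kappa}=u$ is a viscosity subsolution on $\Omega\setminus\overline{B_{r/4}(\hat{x})}$; since these two open sets cover $\Omega$ and the definitions coincide on the overlap, $U_{\kappa}$ is a viscosity subsolution of \eqref{app-eq:F-x-D2u} on all of $\Omega$. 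The two conditions in \eqref{app-eq:kappa-super} are then immediate: $U_{\kappa}\geq u$ by construction, $U_{\kappa}(x)=u(x)$ once $|x-\hat{x}|\geq r/2$ (in particular once $|x-\hat{x}|\geq\kappa$ after also requiring $r\leq\kappa$), and $\sup_\Omega(U_\kappa-u)>0$ because at points $x$ near $\hat{x}$ where $u(x)$ is close to $u_*(\hat{x})=\psi(\hat{x})$ one has $\psi_{\gamma,\theta}(x)>u(x)$, using $\psi_{\gamma,\theta}(\hat{x})=u_*(\hat{x})+\gamma$ and lower semicontinuity of $u_*$.

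The main obstacle, and the step requiring the most care, is the first one: producing the \emph{admissible} strict subsolution $\psi_{\gamma,\theta}$ with the correct ordering against $u$ near $\partial B_{r/2}(\hat{x})$. One must simultaneously (a) keep $D^2\psi-2\theta I\in\Gamma$, which needs the original contact jet $D^2\psi(\hat{x})$ to be comfortably inside the open cone (guaranteed since $F(\hat{x},D^2\psi(\hat{x}))>0$ and \eqref{app-eq:f-boundary} forces admissible jets off $\partial\Gamma$); (b) preserve $F\geq 0$ on a whole ball, not just at $\hat{x}$, which is where continuity of $F(x,M)$ and a compactness argument on $\overline{B_r(\hat{x})}$ enter; and (c) choose $\gamma$ small enough, depending on $\theta$ and on the (strict) gap $u_*-\psi>0$ on the sphere $\{|x-\hat{x}|=r/2\}$ (which is positive unless $\hat{x}$ is not a strict local min, a case handled by shrinking $r$), so that the max-splicing does not propagate the bump past $B_{r/2}(\hat{x})$. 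Once these quantitative choices are pinned down the rest is the routine localization-and-pasting argument via Lemma \ref{lem:Perron-sub}.
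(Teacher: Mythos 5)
Your construction is correct and is essentially the classical bump argument of Lemma 4.4 in \cite{Ishii1992}, which the paper simply cites rather than reproving, so there is nothing to reconcile with a paper-internal proof. (Your parenthetical worry about a strict gap of $u_*-\psi$ on the sphere is unnecessary: the term $-\theta|x-\hat{x}|^2$ already gives $\psi_{\gamma,\theta}<\psi\leq u_*\leq u$ on the annulus once $\gamma<\theta r^2/4$ (and $\gamma<\theta r^2/16$ for the covering by $B_{r/2}$ and $\Omega\setminus\overline{B_{r/4}}$), exactly as in your earlier step, with no strictness of the local minimum required.)
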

\begin{proof}[Proof of Theorem \ref{thm:Perron-m}]

First, it is easily observed that $u^*\in\mathrm{USC}(\bar{\Omega})$ and $u_*\in\mathrm{LSC}(\bar{\Omega})$, and that they satisfy $$\underline{u}\leq u_*\leq u\leq u^*\leq\bar{u}\ \text{ in }\Omega\quad\text{and}\quad u_*=u^*=u=\varphi\ \text{ on }\partial\Omega.$$
From Lemma \ref{lem:Perron-sub}, $u^*$ is a viscosity subsolution of \eqref{app-eq:F-x-D2u}. By definition of $u$, $u^*\leq u$, and thus $u=u^*$ in $\bar{\Omega}$, i.e. $u$ is a subsolution of \eqref{app-eq:F-x-D2u}. 

Then let us show $u_*$ is also a viscosity supersolution of \eqref{app-eq:F-x-D2u}. If not, $u_*$ fails to be a supersolution at $\hat{x}\in\Omega$, then in this case we let $U_{\kappa}$ be provided by Lemma \ref{lem:Perron-super}. Clearly, by \eqref{app-eq:kappa-super} $$U_{\kappa}\geq u\geq\underline{u}\ \text{ in }\Omega\quad\text{and}\quad U_{\kappa}=u=\varphi\ \text{ on }\partial\Omega$$ for sufficiently small $\kappa$ (and additionally $$\lim_{|x|\to\infty}(U_\kappa-\bar{u})(x)= \lim_{|x|\to\infty}(u-\bar{u})(x)=0$$ provided $\Omega$ is unbounded). By Theorem \ref{thm:comparison-b} (Corollary \ref{thm:comparison-ub}), $U_\kappa\leq\bar{u}$. Again, by definition of $u$ we deduce $U_\kappa\leq u$ in $\Omega$, contradicting the property $\sup_{\Omega}(U_{\kappa}-u)>0$ in \eqref{app-eq:kappa-super}. Therefore, $u_*$ is a viscosity supersolution. 

Applying now Theorem \ref{thm:comparison-b} (Corollary \ref{thm:comparison-ub}), it follows that $u^*=u\leq u_*$. This yields $u=u_*=u^*$ in $\bar{\Omega}$. Consequently, $u\in C^0(\bar{\Omega})$ with $u=\varphi$ is a viscosity solution of \eqref{app-eq:D-F-varphi}.
\end{proof}

\end{document}